\documentclass{amsart}

\usepackage{amssymb}
\usepackage{amsmath}
\usepackage{latexsym}
\usepackage{euscript}
\usepackage{enumerate}
\usepackage{color}

{\left[\begin{smallmatrix}}
{\end{smallmatrix}\right]}%

\newcommand{\setdef}[2]{\left\{ #1 :\, \vphantom{#1} #2 \right\}}

   \def\sH{{\mathfrak H}}

      \def\sR{{\mathfrak R}}

      \def\dC{{\mathbb C}}

   \def\dN{{\mathbb N}}

\def\cA{{\EuScript A}}


\def\bm\chi{\mbox{\boldmath$\chi$}}

\def\ker{{\rm ker\,}}
\def\ran{{\rm ran\,}}

\def\dom{{\rm dom\,}}

\def\mul{{\rm mul\,}}

\def\la{\lambda}

\def\dim{{\rm dim\,}}
\def\diag{{\rm diag\,}}

\let\xker=\ker \def\ker{{\xker\,}}
\def\spn{{\rm span\,}}

\newcommand{\veps}{\varepsilon}
\newcommand{\C}{\mathbb{C}}
\newcommand{\N}{\mathbb{N}}

\newtheorem{theorem}{Theorem}[section]
\newtheorem{proposition}[theorem]{Proposition}
\newtheorem{corollary}[theorem]{Corollary}
\newtheorem{lemma}[theorem]{Lemma}
\newtheorem{definition}[theorem]{Definition}
\theoremstyle{definition}

\newtheorem{remark}[theorem]{Remark}

\numberwithin{equation}{section}

\makeatletter
\newcommand{\superimpose}[2]{%
  {\ooalign{$#1\@firstoftwo#2$\cr\hfil$#1\@secondoftwo#2$\hfil\cr}}}
\makeatother

\begin{document}

\title{Linear relations and their singular chains}

\author{Thomas Berger}
\address{Institut f\"{u}r Mathematik\\
Universit\"at  Paderborn\\
Warburger Strasse 100, 33098 Paderborn\\
Germany}
\email{thomas.berger{\scriptsize @}math.upb.de}
\author{Henk de Snoo}
\address{Bernoulli Institute for Mathematics, Computer Science
and Artificial Intelligence\\
University of Groningen \\
P.O.\ Box 407, 9700 AK Groningen \\
Nederland}
\email{h.s.v.de.snoo{\scriptsize @}rug.nl}
\author{Carsten Trunk}
\address{Institut f\"{u}r Mathematik \\
Technische Universit\"{a}t Ilmenau\\
Weimarer Stra{\ss}e~25, 98693~Ilmenau \\
Germany}
\email{carsten.trunk{\scriptsize @}tu-ilmenau.de}
\author{Henrik Winkler}
\address{Institut f\"{u}r Mathematik \\
Technische Universit\"{a}t Ilmenau\\
Weimarer Stra{\ss}e~25, 98693~Ilmenau \\
Germany}
\email{henrik.winkler{\scriptsize @}tu-ilmenau.de}

\dedicatory{Dedicated to our friend  Vladimir Derkach on the occasion of
his seventieth birthday}

\keywords{Linear relation, Jordan chain, singular chain, proper eigenvalue}
\subjclass[2010]{Primary 47A06, 15A18, 15A22; Secondary 15A04}

\begin{abstract}
 Singular chain spaces for linear relations in linear spaces play a fundamental
role in the decomposition of linear relations in finite-dimensional spaces.
In this paper singular chains and singular chain spaces are discussed in detail
for not necessarily finite-dimensional linear spaces. This leads to an identity
characterizing a singular chain space in terms of root spaces.
The so-called proper eigenvalues of a linear relation play an important role
in the finite-dimensional case.
\end{abstract}

\maketitle

\section{Introduction}\label{Sec:intro}

Let $A$ be a linear relation in a linear space $\sH$.
In this paper the singular chain space $\sR_c(A)$ of $A$ is
 considered in detail. In the nontrivial case, such subspaces can only occur
for relations that are not graphs of linear operators.
The singular chain space $\sR_c(A)$ plays a fundamental role
in the decomposition of a linear relation $A$
when the linear space $\sH$ is finite-dimensional;
see \cite{BSTWnew}, \cite{BTW16},  and \cite{SandDeSn05}.

\medskip
Linear relations in linear spaces go back to \cite{Arens};
see also \cite{BHS} and \cite{Cross}.
Let $\sH$ be a possibly infinite-dimensional linear space over $\dC$.
A linear relation $A$ in $\sH$ is defined
as a linear subspace of the product $\sH \times \sH$.
The usual notions for a linear relation $A$ in $\sH$ are defined as follows:
\[
\begin{split}
\dom A&= \{x \in \sH :\, \exists\, y \in \sH \mbox{ with } (x,y) \in A\},\, \,\rm{domain}, \\
\ker A&= \{ x \in \sH :\, (x,0) \in A\}, \rm{kernel},  \\
\ran A&= \{ y \in \sH :\, \exists\, x \in \sH \mbox{ with } (x,y) \in A\}, \,\, \rm{range}, \\
\mul A &= \{ y \in \sH:\, (0,y) \in A\}, \,\, \rm{multivalued\,\, part},  \\
A^{-1} &= \{(y,x) \in \sH \times \sH:\, (x,y) \in A\}, \,\, \rm{inverse}. \\
\end{split}
\]
Furthermore, for a linear relation $A$
and a complex number $\lambda \in \dC$ one defines
the following operations:
\[
\begin{split}
 A-\lambda&=A-\lambda I=\{(x,y-\lambda x) \in \sH\times \sH :\, (x,y) \in A\},\\
 \lambda A &= \{ (x,\lambda y) \in \sH\times \sH :\, (x,y) \in A \}.   \\
 \end{split}
\]
The product of  linear relations $A$ and $B$ in $\sH$ is given by
\[
AB=\{ (x,y) \in \sH \times \sH:\, \exists\, z \in \sH \mbox{ with } (x,z) \in B, (z,y) \in A\},
\]
and note that
\[
A^k =A A^{k-1}, \quad k\geq 1, \quad \mbox{where} \quad A^0=I.
\]

\medskip
This paper is concerned with root spaces for
a linear relation $A$ in a linear space $\sH$ over $\dC$.
The  \emph{root spaces} $\mathfrak{R}_{\lambda}(A)$ of $A$ for
$\lambda \in \dC\cup \{\infty\}$  are linear subspaces of $\sH$ defined by
\begin{equation}\label{rootsp}
 \mathfrak{R}_{\lambda}(A)=
 \bigcup_{i=1}^{\infty} \, \ker(A-\lambda)^{i},\quad \lambda \in \dC,
 \quad \mbox{and} \quad
\mathfrak{R}_{\infty}(A)=\bigcup_{i=1}^{\infty} \, \mul A^{i}.
\end{equation}
Let $\lambda \in \dC$, then $x \in \sR_\lambda(A)$ if and only if
 for some $n \in \dN$ there exists a chain of elements of the form
\begin{equation}\label{jchain}
 (x_n,x_{n-1}+\lambda x_n ),
 (x_{n-1},x_{n-2}+\lambda x_{n-1} ),
 \dots,
 (x_{2}, x_{1}+\lambda x_{2}),
 (x_{1},\lambda x_{1} ) \in A,
\end{equation}
and such that $x=x_n$, the ``endpoint'' of \eqref{jchain}. The chain in \eqref{jchain}
is said to be a
\textit{Jordan chain} for $A$ corresponding to the eigenvalue $\la \in \dC$;
note that for all $1 \leq i \leq n$ one has $(x_i,0) \in (A-\lambda)^i$.
 In fact, if $x \in \ker (A-\lambda)^n \setminus \ker (A-\lambda)^{n-1}$,
 then $x_1, \dots, x_{n-1}$
are linearly independent and satisfy
$x_i \in \ker (A-\lambda)^i \setminus \ker (A-\lambda)^{i-1}$, $1 \leq i \leq n-1$.
Likewise, $y \in \sR_\infty(A)$ if and only if
 for some $m \in \dN$ there exists a chain of elements of the form
\begin{equation}\label{Jane}
   (0,y_1), \ (y_1,y_2), \ \dots \dots \dots , (y_{m-2}, y_{m-1}), \ (y_{m-1},y_m) \in A,
 \end{equation}
and such that $y=y_m$, the ``endpoint'' of \eqref{Jane}. The chain in \eqref{Jane}
is said to be a \textit{Jordan chain} for $A$ corresponding to the eigenvalue $\infty$;
note that for all $1 \leq i \leq m$ one has  $(0,y_i) \in A^i$.
 In fact, if $y \in \mul A^m \setminus \mul A^{m-1}$, then $y_1, \dots, y_{m-1}$
are linearly independent and satisfy
$y_j \in \mul A^j \setminus \mul A^{j-1}$, $1 \leq j \leq m-1$.

\begin{definition}\label{rca}
Let $A$ be a linear relation in $\sH$.
Then the singular chain space $ \sR_c(A)$ is defined by
\begin{equation}
\sR_c(A)=\sR_0(A) \cap \sR_\infty(A).
\end{equation}
\end{definition}

Note that $u \in \sR_c(A)$ if and only if for some $k \in \dN$
there is a chain of elements of the form
\begin{equation}\label{schain}
  (0,u_k), \ (u_{k} , u_{k-1}), \ \dots \dots \dots , (u_{2}, u_{1}), \ (u_{1},0) \in A,
\end{equation}
and such that $u=u_l$ for some $1 \leq l \leq k$.
The chain in \eqref{schain} is said to be a \textit{singular chain} for $A$.
It is clear from \eqref{schain} that  $\sR_c(A) \subset \dom A \cap \ran A$, and that
$\sR_c(A) \neq \{0\}$ implies that $\ker A \cap \sR_c(A)$ and  $\mul A \cap \sR_c(A)$
are non-trivial.

\medskip
In this paper the singular chain space $\sR_c(A)$
will be characterized by means of different
combinations of the root spaces. As a consequence, one sees that
\[
\sR_c(A) \subset \sR_\lambda(A)
\]
for all $\lambda \in \dC \cup \{\infty\}$ and, hence,  if $\sR_c(A)$ is non-trivial,
all of $\dC \cup \{\infty\}$ consists of eigenvalues of $A$.
However, in this case, the eigenvalues $\lambda \in \dC \cup \{\infty\}$
 for which $\sR_c(A) \neq \sR_\lambda(A)$,
 i.e., the proper eigenvalues, deserve special attention.
The main features are Theorem \ref{rootruht} and  Theorem \ref{Lem:Multsum}
 for the singular chain spaces,
and Theorem \ref{Prop:FinSpec-KronForm} for the proper eigenvalues.
In the finite-dimensional case,  Corollary \ref{lood1},
as a consequence of the last result,
gives some fundamental results for the proper eigenvalues.
 For the convenience of the reader the present paper
has been made self-contained by borrowing
a few arguments from \cite{SandDeSn05}.
The various chain spaces are illustrated by
the case of matrix pencils in Section  \ref{pencil}.
In a further publication \cite{BSTWnew}
the results in the present paper will be used to give a detailed study
of the structure of linear relations
in finite-dimensional spaces, extending \cite{SandDeSn05}.

\section{Some transformation results involving chains}

This section contains some formal transformation results concerning chains
related to a linear relation $A$ in a linear space $\sH$.

\begin{lemma}\label{CHAin}
Let $A$ be a linear relation in $\sH$ and let $\lambda \in \dC$.
Then the  chain
\begin{equation}\label{schain+-}
 (0,x_{1}), (x_{1},x_{2}), \dots ,(x_{n-2},x_{n-1})  ,(x_{n-1},x_{n}) \,\, \mbox{ in } \,\, A-\lambda,
\end{equation}
 is transformed by
 \begin{equation}\label{zzkk}
z_m:=\sum_{i=1}^m \begin{pmatrix} n-i-1 \\ n-m-1 \end{pmatrix} (-\la)^{m-i}x_i,
\quad m=1,\ldots, n-1,
\end{equation}
into a chain
\begin{equation}\label{zk8-}
(0,z_1), (z_1,z_2), \ldots, ,(z_{n-2},z_{n-1}) ,  (z_{n-1},x_n) \,\, \mbox{ in } \,\, A.
\end{equation}
 Moreover,
\begin{equation}\label{zk9}
\spn\{z_1,\ldots, z_{n-1}\}=\spn\{x_1,\ldots, x_{n-1}\}.
\end{equation}
\end{lemma}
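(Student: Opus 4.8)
The plan is to verify the chain \eqref{zk8-} one link at a time: each membership $(z_{m-1},z_m)\in A$ will be exhibited as an explicit linear combination of the pairs supplied by the hypothesis, after which the linearity of $A$ closes the argument.

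First I would rewrite \eqref{schain+-}: adopting the convention $x_0:=0$, it says precisely that $(x_{j-1},\,x_j+\lambda x_{j-1})\in A$ for $j=1,\dots,n$. Next, for $1\le j\le m\le n$ introduce the coefficients $c^{(m)}_j:=\binom{n-j}{n-m}(-\lambda)^{m-j}$, and set $z_0:=0$ and $z_n:=x_n$. The crux is the pair of identities, to be shown for every $m=1,\dots,n$,
\[
\sum_{j=1}^{m}c^{(m)}_j\,x_{j-1}=z_{m-1}
\qquad\text{and}\qquad
\sum_{j=1}^{m}c^{(m)}_j\,(x_j+\lambda x_{j-1})=z_m .
\]
Granting them, linearity of $A$ yields $(z_{m-1},z_m)\in A$ for $m=1,\dots,n$, the case $m=n$ being exactly the last link $(z_{n-1},x_n)$, so \eqref{zk8-} follows.

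The first identity is a mere reindexing: using $x_0=0$ to drop the $j=1$ term and substituting $i=j-1$, one arrives at the defining sum \eqref{zzkk} for $z_{m-1}$. For the second, I would separate the $x_j$- and $x_{j-1}$-contributions and collect the coefficient of each $x_j$. The coefficient of $x_m$ is $c^{(m)}_m=\binom{n-m}{n-m}(-\lambda)^0=1$, and for $1\le j\le m-1$ it equals
\[
c^{(m)}_j+\lambda c^{(m)}_{j+1}
=(-\lambda)^{m-j}\left[\binom{n-j}{n-m}-\binom{n-j-1}{n-m}\right]
=(-\lambda)^{m-j}\binom{n-j-1}{n-m-1},
\]
the last step being Pascal's rule. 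For $m\le n-1$ this is precisely the coefficient of $x_j$ in $z_m$ according to \eqref{zzkk}; for $m=n$ all the binomials are of the form $\binom{\cdot}{0}=1$, the bracket vanishes, and only the $x_n$-term survives, in agreement with $z_n=x_n$. I expect this Pascal cancellation --- and, more mundanely, keeping the binomial indices and the boundary cases $m=1,n$ straight --- to be the only place that requires genuine care; everything else is bookkeeping.

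Finally, for \eqref{zk9} I would observe from \eqref{zzkk} that each $z_m$ lies in $\spn\{x_1,\dots,x_m\}$ and that the coefficient of $x_m$ in $z_m$ is $\binom{n-m-1}{n-m-1}=1$. Hence the matrix expressing $z_1,\dots,z_{n-1}$ in terms of $x_1,\dots,x_{n-1}$ is lower triangular with unit diagonal, therefore invertible, and the two spans coincide.
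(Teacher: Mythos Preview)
Your proof is correct and follows essentially the same route as the paper: both rewrite the hypothesis as $(x_{j-1},x_j+\lambda x_{j-1})\in A$, take appropriate linear combinations, and reduce the verification to a Pascal-rule identity among the binomial coefficients, while the span equality comes from the unit-triangular shape of the transformation. Your version is in fact slightly tidier, since by including the pair $(0,x_1)$ in the linear combination via the coefficient $c^{(m)}_1=\binom{n-1}{n-m}(-\lambda)^{m-1}$ you obtain $(z_{m-1},z_m)\in A$ directly, whereas the paper first produces $(z_m,\tilde z_m)\in A$ with $\tilde z_m=z_{m+1}-c\,x_1$ and then corrects by adding $c(0,x_1)$; it also has to treat the endpoint links $(0,z_1)$ and $(z_{n-1},x_n)$ separately, which your uniform indexing absorbs.
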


\begin{proof}
For later use, note that the assumption \eqref{schain+-} is equivalent to
\begin{equation}\label{schain+-m}
 (0,x_{1}), (x_{1},x_{2}+\lambda x_1), \dots ,  (x_{n-2},x_{n-1}+\lambda x_{n-2}),
 (x_{n-1},x_{n}+\lambda x_{n-1}) \,\, \mbox{ in } \,\, A.
\end{equation}
It follows from \eqref{zzkk} that $z_1=x_1$ and that
$z_{n-1}=\sum_{i=1}^{n-1}  (-\la)^{n-1-i}x_i$.
Hence, it is clear that $(0,z_1) \in A$, and, by working backwards
in \eqref{schain+-m}, one also sees that
\[
 (z_{n-1},x_n)
=(x_{n-1} -\lambda x_{n-2} +\lambda^2 x_{n-3}+\dots+(-\lambda)^{n-2}x_1, x_n) \in A.
\]
Thus it remains to verify that $(z_m, z_{m+1}) \in A$
for $1 \leq m \leq n-2$.
For this purpose, define the elements $\tilde z_m$, $1 \leq m \leq n-2$, by
\[
\tilde z_m
:=\sum_{i=1}^m \begin{pmatrix} n-i-1 \\ n-m-1 \end{pmatrix} (-\la)^{m-i}(x_{i+1}+\lambda x_i),
\]
and it follows from \eqref{schain+-m}  that $(z_m, \tilde z_m) \in A$, $1 \leq m \leq n-2$.
A calculation shows for $1 \leq m \leq n-2$ that
\[
\begin{split}
\tilde z_m
&=\sum_{i=1}^m \begin{pmatrix} n-i-1 \\ n-m-1 \end{pmatrix} (-\la)^{m-i} x_{i+1}
-\sum_{i=1}^m \begin{pmatrix} n-i-1 \\ n-m-1 \end{pmatrix} (-\la)^{m-i+1} x_i  \\
&=\sum_{i=2}^{m+1} \begin{pmatrix} n-(i-1)-1 \\ n-m-1 \end{pmatrix} (-\la)^{m-i+1} x_{i}
-\sum_{i=1}^m \begin{pmatrix} n-i-1 \\ n-m-1 \end{pmatrix} (-\la)^{m-i+1} x_i  \\
&=\sum_{i=2}^{m} \left[ \begin{pmatrix} n-(i-1)-1 \\ n-m-1 \end{pmatrix} -
\begin{pmatrix} n-i-1 \\ n-m-1 \end{pmatrix} \right](-\la)^{m-i+1} x_{i} \\
& \hspace{6cm}+ x_{m+1} -\begin{pmatrix} n-2 \\ n-m-1 \end{pmatrix} (-\lambda)^m x_1\\
&=\sum_{i=2}^{m}  \begin{pmatrix} n-i-1 \\ n-(m+1)-1 \end{pmatrix} (-\la)^{m-i+1} x_{i}
+ x_{m+1} -\begin{pmatrix} n-2 \\ n-m-1 \end{pmatrix} (-\lambda)^m x_1\\
&=\sum_{i=1}^{m+1}  \begin{pmatrix} n-i-1 \\ n-(m+1)-1 \end{pmatrix} (-\la)^{m-i+1} x_{i}
-cx_1, \quad
c =\begin{pmatrix} n-1 \\ n-m-1 \end{pmatrix} (-\lambda)^m.
\end{split}
\]
As a consequence of this calculation and \eqref{zzkk} one sees that
$\tilde z_m=z_{m+1}-cx_1$ with the constant $c$ as indicated.
One concludes that
\[
 (z_m, z_{m+1})=(z_m, \tilde z_m) + c(0,x_1) \in A
\]
for $1 \leq m \leq n-2$.
Hence, the proof of \eqref{zk8-} is complete for the case $n \geq 3$.
The remaining case is trivial.

Finally, observe that the $(n-1) \times (n-1)$ matrix of the transformation
in \eqref{zzkk} is triangular with $1$'s on the diagonal.
Hence, the equality \eqref{zk9} holds.
\end{proof}

\begin{lemma}\label{Lem:Chaintrafo}
Let $A$ be a linear relation in $\sH$ and let $\lambda \in \dC$.
Then the chain
\begin{equation}\label{HH1}
(x_0,x_1), (x_1,x_2),\ldots, (x_{k-1},x_k), (x_k,0)\,\, \mbox{ in } \,\,A-\lambda
\end{equation}
 is transformed by
\begin{equation}\label{HH3}
z_m:=\sum_{i=0}^m \begin{pmatrix} m \\ i \end{pmatrix} \la^{m-i}x_i,
\quad m \geq 0,
\end{equation}
with $x_{i}=0$ for $i>k$, into a chain
\begin{equation}\label{HH4}
 (z_0,z_1), (z_1,z_2), (z_2,z_3), \dots   \,\, \mbox{ in } \,\, A.
\end{equation}
 \end{lemma}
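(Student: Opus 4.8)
The plan is to avoid any clever rewriting of the chain (as was needed in Lemma~\ref{CHAin}) and simply to use that $A$ is a linear subspace of $\sH\times\sH$, together with Pascal's rule $\binom{m}{j-1}+\binom{m}{j}=\binom{m+1}{j}$. First I would restate the hypothesis \eqref{HH1}: the pair $(x_{i},x_{i+1})$ lies in $A-\lambda$ exactly when $(x_{i},\,x_{i+1}+\lambda x_{i})\in A$. Using the convention $x_{i}=0$ for $i>k$ (so that the last pair $(x_{k},0)$ of \eqref{HH1} contributes $(x_{k},\lambda x_{k})\in A$, and every pair with index $i>k$ is the trivial $(0,0)\in A$), the chain \eqref{HH1} is equivalent to the infinite family
\[
(x_{i},\,x_{i+1}+\lambda x_{i})\in A \qquad\text{for all } i\geq 0 .
\]

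Now fix $m\geq 0$. Since $A$ is a linear relation, the combination $\sum_{i=0}^{m}\binom{m}{i}\lambda^{\,m-i}(x_{i},\,x_{i+1}+\lambda x_{i})$ again belongs to $A$. By \eqref{HH3} its first entry is $\sum_{i=0}^{m}\binom{m}{i}\lambda^{m-i}x_{i}=z_{m}$. For the second entry I would split it as $\sum_{i=0}^{m}\binom{m}{i}\lambda^{m-i}x_{i+1}+\sum_{i=0}^{m}\binom{m}{i}\lambda^{m+1-i}x_{i}$, reindex $i\mapsto i-1$ in the first sum, and collect the coefficient of each $x_{j}$ for $0\leq j\leq m+1$; it equals $\big(\binom{m}{j-1}+\binom{m}{j}\big)\lambda^{m+1-j}=\binom{m+1}{j}\lambda^{m+1-j}$ by Pascal's identity (with $\binom{m}{-1}=\binom{m}{m+1}=0$). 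Thus the second entry is $\sum_{j=0}^{m+1}\binom{m+1}{j}\lambda^{m+1-j}x_{j}=z_{m+1}$, and hence $(z_{m},z_{m+1})\in A$. As $m\geq 0$ was arbitrary, this is precisely \eqref{HH4}; the case $m=0$ merely returns the first pair $(x_{0},x_{1}+\lambda x_{0})\in A$.

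I expect no real obstacle: the entire content is the one binomial identity plus the subspace property of $A$, and the shift of summation index is routine. The only point that genuinely requires care is the bookkeeping at the tail of the original chain — one must invoke $x_{i}=0$ for $i>k$ to legitimately write every $(x_{i},x_{i+1}+\lambda x_{i})$, $i\geq 0$, as an element of $A$, after which the formal manipulation goes through unchanged. (Note also that the resulting chain \eqref{HH4} is genuinely infinite: for $\lambda\neq 0$ the $z_{m}$ need not eventually vanish, which is consistent with the statement.)
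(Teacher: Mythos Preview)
Your argument is correct and is essentially the same as the paper's: both rewrite \eqref{HH1} as $(x_i,\lambda x_i+x_{i+1})\in A$ for all $i\ge 0$, form the linear combination $\sum_{i=0}^m\binom{m}{i}\lambda^{m-i}(x_i,\lambda x_i+x_{i+1})\in A$, and identify the second entry with $z_{m+1}$ via a shift of index and Pascal's rule. The only cosmetic difference is that the paper separates out the case $m=0$ before running the computation for $m\ge 1$, whereas you handle all $m\ge 0$ at once.
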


\begin{proof}
For later use, note that the assumption \eqref{HH1} with the additional
convention $x_{i}=0$ for $i>k$ is equivalent to
\begin{equation}\label{HH2}
(x_i,\la x_i +x_{i+1})\in A,\quad i \geq 0.
\end{equation}
It follows from \eqref{HH3} that $z_0=x_0$, $z_1=\la x_0+x_1$.
Hence it is clear that  $(z_0,z_1)\in A$.
Thus it remains to verify $(z_m,z_{m+1}) \in A$ for $m \geq 1$.
For this purpose,  define the elements $\tilde z_m$, $m \geq 1$, by
\[
\tilde z_m
:=\sum_{i=0}^m \begin{pmatrix} m \\ i \end{pmatrix} \la^{m-i}(\la x_i +x_{i+1}),
 \]
and it follows from \eqref{HH2} that  $(z_m,\tilde z_m)\in A$ for $m \geq 1$.
A calculation shows for  $m \geq 1$  that
\begin{eqnarray*}
\tilde z_m&=&\sum_{i=0}^m \begin{pmatrix} m \\ i \end{pmatrix} \la^{m+1-i}x_i+
\sum_{i=0}^m \begin{pmatrix} m \\ i \end{pmatrix} \la^{m-i}x_{i+1}\\
&=&\sum_{i=0}^m \begin{pmatrix} m \\ i \end{pmatrix} \la^{m+1-i}x_i+
\sum_{i=1}^{m+1} \begin{pmatrix} m \\ i-1 \end{pmatrix} \la^{m+1-i}x_{i}\\
&=&\la^{m+1}x_0+\sum_{i=1}^m \left[\begin{pmatrix} m \\ i \end{pmatrix}
+\begin{pmatrix} m \\ i-1 \end{pmatrix}\right] \la^{m+1-i}x_i+ x_{m+1}\\
&=&\la^{m+1}x_0+\sum_{i=1}^m \begin{pmatrix} m+1 \\ i \end{pmatrix} \la^{m+1-i}x_i+ x_{m+1}\\
&=&\sum_{i=0}^{m+1} \begin{pmatrix} m+1 \\ i \end{pmatrix} \la^{m+1-i}x_i.
\end{eqnarray*}
As a consequence of this calculation and \eqref{HH3} one sees that $\tilde z_m=z_{m+1}$.
One concludes that $(z_m, z_{m+1})\in A$ for $m \geq 1$. Hence,  the proof
of \eqref{HH4} is complete.
\end{proof}

\begin{remark}\label{transfo}
The transformation result
in Lemma~\ref{Lem:Chaintrafo} can be written in the following way.
For any $s \geq 0$ one has
\begin{equation}\label{eq:trafo-Bkm}
 [z_0,\ldots,z_s]^\top= C_{s,k}(\la)
 \, [x_0,\ldots,x_k]^\top,\,
\end{equation}
where $C_{s,k}(\lambda)$
is an $(s+1)\times(k+1)$ matrix:
 \begin{equation}\label{Bnk}
C_{s,k}(\lambda)
=(c_{m,i})_{m=0,i=0}^{s,k},
\end{equation}
whose coefficients $c_{m,i}$ are given by
\[
c_{m,i}=\begin{pmatrix} m \\ i \end{pmatrix}\la^{m-i} \mbox{ if } 0\leq i\le m \leq s,
\mbox{ and } c_{m,i}=0 \mbox{ if }  m < i \leq k.
\]
Thus, for $s \geq k$ one has
\[
\large{C_{s,k}(\la)} =
\begin{tiny}
\begin{pmatrix}
\vspace{0.1cm}
1 &0 & 0 &\dots & \dots & 0 & 0 \\
\vspace{0.1cm}
\lambda &1 &0 &\dots &\dots & 0 & 0 \\
\vspace{0.1cm}
\lambda^2 & \binom{2}{1}\lambda  &1 &\dots &\dots & 0 & 0 \\
\vspace{0.1cm}
\dots&\dots& \dots &\dots& \dots &\dots &\dots\\
\vspace{0.1cm}
\dots&\dots& \dots &\dots& \dots &\dots &\dots \\
\vspace{0.1cm}
\lambda^{k-1} & \,\,\binom{k-1}{1} \lambda^{k-2} & \binom{k-1}{2} \lambda^{k-3}&\dots & \dots
& 1  & 0\\
\vspace{0.1cm}
\lambda^{k} & \,\,\binom{k}{1} \lambda^{k-1} &\binom{k}{2} \lambda^{k-2}&\dots & \dots
&\binom{k}{k-1} \lambda & 1\\
\vspace{0.1cm}
\lambda^{k+1} & \binom{k+1}{1} \lambda^{k} &\binom{k+1}{2} \lambda^{k-1} &\dots & \dots &
\binom{k+1}{k-1} \lambda^2  &\binom{k+1}{k}\lambda \\
\vspace{0.1cm}
\dots&\dots& \dots &\dots& \dots &\dots &\dots\\
\vspace{0.1cm}
\dots&\dots& \dots &\dots& \dots &\dots &\dots \\
\vspace{0.1cm}
\lambda^{s-1} & \binom{s-1}{1} \lambda^{s-2} & \binom{s-1}{2} \lambda^{s-3} &\dots & \dots
& \binom{s-1}{k-1} \lambda^{s-k}
& \binom{s-1}{k} \lambda^{s-k-1} \\
\lambda^{s} & \binom{s}{1} \lambda^{s-1} & \binom{s}{2} \lambda^{s-2} &\dots & \dots
&\binom{s}{k-1} \lambda^{s-k+1} &
\binom{s}{k}\lambda^{s-k}
\end{pmatrix}.
\end{tiny}
\]
\end{remark}
\vspace{0.2cm}

\section{Identities for root spaces}

Let $A$ be a linear relation in a linear space $\sH$.
There are a number of useful algebraic identities for the root spaces in \eqref{rootsp}.
First of all there is a simple identity involving a shift of the parameter.
It is clear from the definition in \eqref{rootsp} that for all $\lambda, \mu \in \dC$:
\begin{equation}\label{hen22}
 \sR_\la(A) = \sR_{\la-\mu}(A-\mu).
\end{equation}
In particular, one has for all $\lambda \in \dC$:
\begin{equation}\label{rlm}
 \sR_0(A-\lambda)=\sR_\lambda(A).
\end{equation}
The following lemma shows some invariance properties
for the root space at $\infty$.
These results go back to \cite[Lemma 2.3]{SandDeSn05};
the proof is included for completeness.

\begin{lemma}\label{henny}
Let $A$ be a linear relation in $\sH$. Then for all $\lambda \in \dC$
\begin{equation}\label{hen222}
 \sR_\infty(A)=\sR_\infty(A-\lambda),
\end{equation}
and for all $\lambda \in \dC \setminus \{0\}$
\begin{equation}\label{hen2222}
 \sR_\infty(A)=\sR_\infty(\lambda A).
\end{equation}
\end{lemma}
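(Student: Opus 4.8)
The plan is to derive both equalities from the stronger, level\nobreakdash-by\nobreakdash-level identities
\[
\mul(A-\lambda)^n=\mul A^n,\qquad \mul(\lambda A)^n=\mul A^n\ \ (\lambda\neq 0),
\]
valid for every $n\ge 1$; taking the union over $n$ and recalling the definition \eqref{rootsp} of $\sR_\infty$ then yields \eqref{hen222} and \eqref{hen2222}. In each case it suffices to prove the inclusion ``$\subseteq$'', since the reverse inclusion follows by applying the proven inclusion to $A-\lambda$ with parameter $-\lambda$ (note $(A-\lambda)-(-\lambda)=A$) in the first case, and to $\lambda A$ with parameter $1/\lambda$ (note $(1/\lambda)(\lambda A)=A$, $\lambda\neq 0$) in the second. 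Two elementary facts will be used throughout: each $\mul A^n$ is a linear subspace, and $\mul A^n\subseteq\mul A^{n+1}$ (since $(0,0)\in A$, a pair $(0,y)\in A^n$ also lies in $A^{n+1}=A^n A$ via the intermediate point $0$).

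For $\mul(A-\lambda)^n\subseteq\mul A^n$ I would induct on $n$. The case $n=1$ is in fact an equality, immediate from the definitions since $(0,y)\in A-\lambda$ precisely when $(0,y)\in A$. For the step, suppose the inclusion holds for $n$ and let $(0,y)\in(A-\lambda)^{n+1}=(A-\lambda)(A-\lambda)^n$; choose $z$ with $(0,z)\in(A-\lambda)^n$ and $(z,y)\in A-\lambda$, i.e.\ $(z,y+\lambda z)\in A$. By the induction hypothesis $(0,z)\in A^n$, hence $(0,y+\lambda z)\in A^{n+1}$, that is $y+\lambda z\in\mul A^{n+1}$. The naive wish to pass from $(z,y+\lambda z)\in A$ to $(z,y)\in A$ is not granted — this is the one genuine obstacle — but it is circumvented: $z\in\mul A^n\subseteq\mul A^{n+1}$, so $\lambda z\in\mul A^{n+1}$ by linearity, and therefore $y=(y+\lambda z)-\lambda z\in\mul A^{n+1}$. (Alternatively, this inclusion is immediate from Lemma~\ref{CHAin}, which transforms a chain \eqref{schain+-} in $A-\lambda$ into a chain \eqref{zk8-} in $A$ with the \emph{same} endpoint $x_n$.)

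For $\mul(\lambda A)^n\subseteq\mul A^n$, $\lambda\neq 0$, I would again induct on $n$. The base case is $\mul(\lambda A)=\lambda\,\mul A=\mul A$. For the step, if $(0,w)\in(\lambda A)^{n+1}=(\lambda A)(\lambda A)^n$, choose $u$ with $(0,u)\in(\lambda A)^n$ and $(u,w)\in\lambda A$, i.e.\ $(u,w/\lambda)\in A$; by the induction hypothesis $(0,u)\in A^n$, so $(0,w/\lambda)\in A^{n+1}$, and hence $w\in\lambda\,\mul A^{n+1}=\mul A^{n+1}$. Equivalently, one may rescale a Jordan chain $(0,y_1),(y_1,y_2),\dots,(y_{m-1},y_m)$ in $\lambda A$ by $w_i:=\lambda^{-i}y_i$ to obtain a Jordan chain $(0,w_1),(w_1,w_2),\dots,(w_{m-1},w_m)$ in $A$, giving $y_m=\lambda^m w_m\in\lambda^m\,\mul A^m=\mul A^m$.

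In summary, once these level\nobreakdash-by\nobreakdash-level reductions are set up, everything amounts to unwinding the definitions of the product of relations, of $\mul$, and of $A-\lambda$ and $\lambda A$; the only point needing a moment's thought is the absorption of the leftover term $\lambda z$ in the induction step for \eqref{hen222}, which succeeds precisely because of the nesting $\mul A^n\subseteq\mul A^{n+1}$.
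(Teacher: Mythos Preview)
Your proof is correct. For \eqref{hen2222} your rescaling argument is exactly what the paper does. For \eqref{hen222} the paper instead invokes Lemma~\ref{CHAin}: given a chain $(0,x_1),\dots,(x_{n-1},x_n)$ in $A-\lambda$, the explicit binomial transformation \eqref{zzkk} produces a chain in $A$ with the \emph{same} endpoint $x_n$, so $x_n\in\sR_\infty(A)$. Your induction on $n$ using the absorption $\lambda z\in\mul A^{n}\subseteq\mul A^{n+1}$ bypasses that combinatorial machinery entirely and is more elementary; it also makes the level-by-level identity $\mul(A-\lambda)^n=\mul A^n$ explicit, which the paper's argument yields only implicitly. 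The price is minimal---you trade one citation of Lemma~\ref{CHAin} for a short induction---so either route is perfectly adequate here; the paper's choice is natural only because Lemma~\ref{CHAin} is needed elsewhere anyway.
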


\begin{proof}
First, it will be shown that
$\sR_\infty(A-\lambda) \subset \sR_\infty(A)$ for any $\lambda \in \dC$.
To see this, let $x \in \sR_\infty(A-\lambda)$.
Then $x=x_n$ for some chain of the form \eqref{schain+-}.
By Lemma \ref{CHAin} it follows that $x=z_n$ for some chain of the form \eqref{zk8-}.
In other words, one concludes that $x \in \sR_\infty(A)$. Thus
it follows that $\sR_\infty(A-\lambda) \subset \sR_\infty(A)$ for any $\lambda \in \dC$.
Now it is clear that
\[
 \sR_\infty(A)=\sR_\infty(A-\lambda+\lambda) \subset \sR_\infty(A-\lambda),
\]
where the inclusion is obtained by applying the earlier observation
(with $A$ replaced by $A-\lambda$ and $\lambda$ replaced by $-\lambda$).
Therefore, \eqref{hen222} has been shown.

Next it will be shown that $\sR_\infty(A) \subset \sR_\infty(\lambda A)$
for $\lambda \in \dC \setminus \{0\}$.
To see this, let $x \in \sR_\infty(A)$.
Then $x=x_n$ for some chain of the form
\begin{equation*}
 (0,x_{1}), (x_{1},x_{2}), \dots ,(x_{n-1},x_{n}) \in A.
\end{equation*}
Then, clearly,
\[
 (0, \lambda x_1), (\lambda x_1, \lambda^2 x_2), \dots\dots,
 (\lambda^{n-1}x_{n-1}, \lambda^n x_n) \in \lambda A,
\]
and this shows that $\lambda^n x_n \in \sR_\infty(\lambda A)$,
so that also $x_n \in \sR_\infty(\lambda A)$.
Thus it follows that $\sR_\infty(A) \subset \sR_\infty(\lambda A)$
for $\lambda \in \dC \setminus \{0\}$.
Now observe that
\[
 \sR_\infty(\lambda A)
 \subset \sR_\infty \left( \frac{1}{\lambda} \lambda A\right)=\sR_\infty(A),
 \quad \lambda \in \dC \setminus \{0\},
\]
where the inclusion is obtained by applying the earlier observation
(with $A$ replaced by $\lambda A$ and $\lambda$ replaced by $1/\lambda$).
Therefore, \eqref{hen2222} has been shown.
\end{proof}

The following simple identity, for a linear relation $A$ in $\sH$
and $\lambda \in \dC \setminus \{0\}$,
\begin{equation}\label{ide}
 (A-\lambda)^{-1}=-\frac{1}{\la}-\frac{1}{\la^2}\left(A^{-1}-\frac{1}{\la}\right)^{-1},
\end{equation}
is easily verified; cf. \cite{BHS}. By taking multivalued parts in this identity,
and noting that multivalued parts are shift-invariant, one sees that
\[
 \ker (A-\la) = \ker \left(A^{-1}-\tfrac{1}{\la}\right), \quad \la \in \dC \setminus \{0\}.
\]
This last  identity between the kernels
can be further extended to the root spaces; cf. \cite[Proposition 2.4]{SandDeSn05}.
The simple proof is included as an illustration of Lemma \ref{henny}.

\begin{lemma}\label{rlm0}
Let $A$ be a linear relation in $\sH$. Then for all $\lambda \in \dC \setminus \{0\}$:
\begin{equation}\label{popo}
\sR_{\la}(A)=\sR_{\la^{-1}}(A^{-1}).
 \end{equation}
Moreover,
\begin{equation}\label{hen1}
 \sR_{0}(A)=\sR_{\infty}(A^{-1}),
 \quad
 \sR_{\infty}(A)=\sR_{0}(A^{-1}).
\end{equation}
\end{lemma}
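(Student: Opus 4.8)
The statement to prove is Lemma~\ref{rlm0}, which relates the root spaces of $A$ and its inverse $A^{-1}$. The plan is to deduce everything from the definition of $A^{-1}$ together with the shift invariance results of Lemma~\ref{henny} and the resolvent-type identity~\eqref{ide}. First I would prove the two identities in~\eqref{hen1}, which are almost tautological: by the definition of $\sR_0(A)$ in~\eqref{rootsp} and of $\sR_\infty(A^{-1})$, a point $x$ lies in $\sR_0(A)$ iff there is a Jordan chain at $0$ of the form~\eqref{jchain} with $\lambda=0$, i.e. a chain $(x_n,x_{n-1}),(x_{n-1},x_{n-2}),\dots,(x_2,x_1),(x_1,0)\in A$; reversing every pair gives exactly a chain of the form~\eqref{Jane} in $A^{-1}$, so $x=x_n\in\sR_\infty(A^{-1})$, and conversely. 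Equivalently, $\ker(A-0)^i=\ker A^i=\mul(A^{-1})^i$ since $(A^i)^{-1}=(A^{-1})^i$, and one takes the union over $i$. The second identity in~\eqref{hen1} is the same statement with $A$ and $A^{-1}$ interchanged, using $(A^{-1})^{-1}=A$.

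For~\eqref{popo}, fix $\lambda\in\dC\setminus\{0\}$. The clean route is through the identity~\eqref{ide}, which gives
\[
(A-\lambda)^{-1}=-\tfrac{1}{\lambda}-\tfrac{1}{\lambda^2}\Bigl(A^{-1}-\tfrac{1}{\lambda}\Bigr)^{-1}.
\]
Taking root spaces at $\infty$ on both sides and using Lemma~\ref{henny}: the left side gives $\sR_\infty\bigl((A-\lambda)^{-1}\bigr)=\sR_0(A-\lambda)$ by the already-established first identity in~\eqref{hen1} (applied to the relation $A-\lambda$), and then $\sR_0(A-\lambda)=\sR_\lambda(A)$ by~\eqref{rlm}. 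For the right side, $\sR_\infty$ is invariant under adding the scalar $-1/\lambda$ (shift invariance, \eqref{hen222}) and under multiplication by the nonzero scalar $-1/\lambda^2$ (\eqref{hen2222}), so it equals $\sR_\infty\bigl((A^{-1}-\tfrac1\lambda)^{-1}\bigr)$, which by the first identity in~\eqref{hen1} (applied to $A^{-1}-\tfrac1\lambda$) equals $\sR_0\bigl(A^{-1}-\tfrac1\lambda\bigr)=\sR_{1/\lambda}(A^{-1})$ by~\eqref{rlm}. Equating the two sides yields $\sR_\lambda(A)=\sR_{\lambda^{-1}}(A^{-1})$.

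I expect the only real care needed is bookkeeping: making sure the identity~\eqref{ide} is legitimately an equality of linear relations (it is the standard resolvent identity for relations, cited to \cite{BHS}), and that Lemma~\ref{henny} is being applied to the correct relations with the correct scalars. An alternative, even more elementary, proof of~\eqref{popo} avoids~\eqref{ide} entirely: work directly with Jordan chains. A chain~\eqref{jchain} for $A$ at $\lambda\neq 0$ can, after rescaling the $x_i$ by suitable powers of $\lambda$ and taking linear combinations, be converted into a Jordan chain for $A^{-1}$ at $\lambda^{-1}$ with the same endpoint; this is exactly the kind of triangular-with-unit-diagonal substitution used in Lemma~\ref{CHAin} and Lemma~\ref{Lem:Chaintrafo}. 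However, the route via~\eqref{ide} and Lemma~\ref{henny} is shorter and self-contained given what has already been proved, so that is the approach I would present. The main obstacle, such as it is, is simply verifying that no hypothesis of Lemma~\ref{henny} is violated — in particular that the scalars $-1/\lambda$ and $-1/\lambda^2$ are handled by~\eqref{hen222} and~\eqref{hen2222} respectively, the latter requiring nonzero scalars, which holds since $\lambda\neq 0$.
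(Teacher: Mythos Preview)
Your proposal is correct and follows essentially the same route as the paper's own proof: the identities in~\eqref{hen1} are deduced directly from $\ker A^i=\mul (A^{-1})^i$, and then~\eqref{popo} is obtained by applying $\sR_\infty$ to both sides of~\eqref{ide}, invoking the shift and scalar invariance from Lemma~\ref{henny}, and converting back via~\eqref{hen1} and~\eqref{rlm}. The only cosmetic difference is that the paper records the intermediate equality $\sR_\infty((A-\lambda)^{-1})=\sR_\infty((A^{-1}-\tfrac{1}{\lambda})^{-1})$ separately before chaining the remaining identities.
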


\begin{proof}
The identities in \eqref{hen1} follow directly
from the definition and $\ker A=\mul A^{-1}$.
For $\lambda \in \dC\setminus \{0\}$ it is a consequence
of \eqref{hen222},  \eqref{hen2222}, and \eqref{ide} that
\begin{equation}\label{hola}
 \sR_\infty((A-\lambda)^{-1})
 =\sR_\infty \left(\left(A^{-1}-\frac{1}{\la}\right)^{-1} \right).
\end{equation}
Hence, for  $\lambda \in \dC\setminus \{0\}$, one sees from this
\[
\begin{split}
\sR_\lambda(A) & \stackrel{\eqref{rlm}}{=}   \sR_0(A-\lambda)
\stackrel{\eqref{hen1}}{=} \sR_\infty((A-\lambda)^{-1}) \\
& \stackrel{\eqref{hola}}{=}
\sR_\infty \left(\left(A^{-1}-\frac{1}{\la}\right)^{-1} \right)
\stackrel{\eqref{hen1}}{=}
\sR_0 \left(A^{-1}-\frac{1}{\la}\right)
\stackrel{\eqref{rlm}}{=}  \sR_{\la^{-1}}(A^{-1}),
\end{split}
\]
which gives \eqref{popo}.
\end{proof}

\section{Identities for singular chain spaces}

Let $A$ be a linear relation in a linear space $\sH$.
Recall from  Definition \ref{rca}  that
$\sR_c(A)=\sR_0(A) \cap \sR_\infty(A)$.
In Lemma \ref{henny} it has been shown that
$\sR_\infty(A)$ is shift-invariant.
In fact, it can be shown that also the space
$\sR_c(A)$ in Definition \ref{rca} is invariant
under translations of the relation $A$.

\begin{lemma}\label{hannon}
Let $A$ be a linear relation in $\sH$.
For any $\lambda \in \dC$ one has
\begin{equation}\label{shiftrc}
 \sR_{c}(A)=\sR_{c}(A-\lambda).
\end{equation}
\end{lemma}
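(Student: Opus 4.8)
The plan is to reduce \eqref{shiftrc} to the two facts already available, namely the shift-invariance of $\sR_\infty$ from Lemma~\ref{henny} and the chain-transformation result of Lemma~\ref{CHAin}. Since $\sR_c(A)=\sR_0(A)\cap\sR_\infty(A)$ and $\sR_c(A-\lambda)=\sR_0(A-\lambda)\cap\sR_\infty(A-\lambda)$, and \eqref{hen222} already gives $\sR_\infty(A)=\sR_\infty(A-\lambda)$, it suffices to show that the intersection $\sR_0(A)\cap\sR_\infty(A)$ is unchanged when $\sR_0(A)$ is replaced by $\sR_0(A-\lambda)$. Equivalently, I will show directly that $u\in\sR_c(A)$ iff $u$ is the ``endpoint'' (or an interior point) of a singular chain for $A-\lambda$ of the form \eqref{schain} with $A$ replaced by $A-\lambda$.

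The key step is the following. Suppose $u\in\sR_c(A)$, so there is a singular chain
\begin{equation*}
 (0,u_k),\ (u_k,u_{k-1}),\ \dots,\ (u_2,u_1),\ (u_1,0)\in A,
\end{equation*}
with $u=u_l$ for some $1\le l\le k$. Reading this chain from the bottom up, the elements $(u_1,0),(u_2,u_1),\dots,(u_k,u_{k-1}),(0,u_k)$ form a chain of the type \eqref{jchain} at the eigenvalue $0$, i.e.\ they witness $u_k\in\ker(A)^{k}$, but more to the point they also form an $\infty$-chain \eqref{Jane} read the other way. To pass to $A-\lambda$, I apply Lemma~\ref{CHAin} in the reverse direction: a singular chain for $A$ is in particular a chain of the form \eqref{schain+-} with $\lambda$ replaced by $0$ sitting inside $A=(A-\lambda)-(-\lambda)$; Lemma~\ref{CHAin} (applied to the relation $A-\lambda$ and the scalar $-\lambda$, so that $(A-\lambda)-(-\lambda)=A$) converts the given singular chain for $A$ into a singular chain for $A-\lambda$ whose interior spans the same subspace, by \eqref{zk9}. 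The last link $(u_1,0)\in A$ corresponds, after the transformation, to a last link $(z_{\bullet},0)\in A-\lambda$ because the endpoint $x_n$ in \eqref{zk8-} is preserved and here that endpoint is $0$; and the first link $(0,u_k)\in A$ is preserved as $(0,z_1)\in A-\lambda$ with $z_1=u_k$. Hence the transformed chain is again a genuine singular chain \eqref{schain} for $A-\lambda$, and by \eqref{zk9} its interior spans $\spn\{u_1,\dots,u_{k-1}\}$, which contains $u=u_l$ (the case $l=k$ being handled since $u_k=z_1$ is the first interior node). Therefore $u\in\sR_c(A-\lambda)$, giving $\sR_c(A)\subset\sR_c(A-\lambda)$.

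The reverse inclusion then follows by the usual substitution trick used already in Lemma~\ref{henny}: apply the inclusion just proved with $A$ replaced by $A-\lambda$ and $\lambda$ replaced by $-\lambda$, yielding $\sR_c(A-\lambda)\subset\sR_c((A-\lambda)+\lambda)=\sR_c(A)$. Combining the two inclusions gives \eqref{shiftrc}. I expect the only real point requiring care to be the bookkeeping at the two ends of the chain under the transformation \eqref{zzkk}: one must check that a singular chain (which has the extra constraints $(0,u_k)\in A$ at the top and $(u_1,0)\in A$ at the bottom, beyond being a mere Jordan-type chain) is sent to a chain with those same two extra constraints for $A-\lambda$. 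The top end is immediate since $z_1=x_1$, and the bottom end is immediate since the endpoint $x_n$ of \eqref{zk8-} equals the endpoint of \eqref{schain+-}, which here is $0$; alternatively, one may simply observe that a singular chain for $A$ of length $k$ is nothing but a Jordan chain of \eqref{jchain}-type at $0$ together with the ``closing'' relation $(0,u_k)\in A$, and invoke Lemma~\ref{CHAin} only for the open part. A short direct verification, or an explicit appeal to \eqref{zk8-} and \eqref{zk9}, closes this gap without further computation.
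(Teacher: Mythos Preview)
Your argument is correct and follows essentially the same route as the paper: apply Lemma~\ref{CHAin} to a singular chain (viewed as an $\infty$-chain with endpoint $0$), use the span identity~\eqref{zk9} to recover the original entry, and then obtain the other inclusion by the substitution $A\mapsto A-\lambda$, $\lambda\mapsto-\lambda$. The only cosmetic difference is the direction: the paper proves $\sR_c(A-\lambda)\subset\sR_c(A)$ first, which lets one invoke Lemma~\ref{CHAin} as stated without the preliminary substitution, whereas you prove $\sR_c(A)\subset\sR_c(A-\lambda)$ first and therefore must first rewrite $A=(A-\lambda)-(-\lambda)$ before applying the lemma. One small cleanup: with $n=k+1$ and $x_{k+1}=0$, \eqref{zk9} actually gives $\spn\{z_1,\dots,z_k\}=\spn\{u_1,\dots,u_k\}$, so no separate treatment of $l=k$ is needed.
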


\begin{proof}
It will be shown that
$\sR_c(A-\lambda) \subset \sR_{c}(A)$, $\lambda \in \dC$.
To see this last inclusion, let $x\in\sR_{c}(A-\lambda)$.
Then the element $x$ is some entry  of an element
in a chain of the form
\begin{equation*}\label{schain+}
 (0,x_{1}), (x_{1},x_{2}), \dots ,(x_{s-1},x_{s}), (x_{s},0)
 \,\, \mbox{ in } \,\,A-\lambda.
\end{equation*}
By Lemma~\ref{CHAin} the transformation \eqref{zzkk}
produces a chain which satisfies
\begin{equation*}
(0,z_1), (z_1,z_2), \ldots, (z_{s-1},z_s), (z_s,0)\in A,
\end{equation*}
and, in addition,
\begin{equation*}
\spn\{x_1,\ldots, x_s\}=\spn\{z_1,\ldots, z_s\}.
\end{equation*}
By definition, each $z_i \in \sR_c(A)$ and, hence,
$x \in \sR_c(A)$. Therefore it follows that
$\sR_c(A-\lambda) \subset \sR_{c}(A)$, $\lambda \in \dC$.
Now it is clear that
\[
 \sR_c(A)=\sR_c(A-\lambda+\lambda) \subset \sR_c(A-\lambda),
\]
where the inclusion is obtained by applying the earlier observation (with $A$
replaced by $A-\lambda$ and $\lambda$ replaced by $-\lambda$).
Therefore, \eqref{shiftrc} has been shown.
\end{proof}

By Lemma~\ref{hannon}, if $\sR_c(A)=\{0\}$, one sees that for $x \in \ker (A-\lambda)^n$
the chain in \eqref{jchain} with $x=x_n$ is uniquely determined.
Likewise, for $y \in \mul A^m$ the chain in \eqref{Jane}
with $y=y_m$ is uniquely determined if  $\sR_c(A)=\{0\}$. Furthermore, one is able
to characterize $\sR_c(A)$ in a different way,
involving the root space at $\lambda \in \dC$
rather than the one  at $\lambda=0$.

\begin{proposition}\label{Lem:SingChainMani}
Let $A$ be a linear relation in $\sH$. For all $\lambda\in\dC$ one has
\begin{equation}\label{il+}
 \sR_c(A) = \sR_\lambda(A) \cap \sR_{\infty}(A).
\end{equation}
\end{proposition}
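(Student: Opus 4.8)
The plan is to obtain \eqref{il+} directly from three facts that are already in place: the shift-invariance of the singular chain space (Lemma~\ref{hannon}), the elementary parameter-shift identity \eqref{rlm} for root spaces, and the shift-invariance of the root space at infinity (Lemma~\ref{henny}, in the form \eqref{hen222}). In other words, the proposition is essentially a repackaging of these lemmas, and the proof should be a short chain of equalities rather than a new argument about chains.

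Concretely, first I would apply Definition~\ref{rca} to the translated relation $A-\lambda$ in place of $A$ — this is legitimate since that definition is stated for an arbitrary linear relation — to get
\[
 \sR_c(A-\lambda)=\sR_0(A-\lambda)\cap\sR_\infty(A-\lambda).
\]
Next I would rewrite each of the two factors on the right: by \eqref{rlm} one has $\sR_0(A-\lambda)=\sR_\lambda(A)$, and by \eqref{hen222} one has $\sR_\infty(A-\lambda)=\sR_\infty(A)$. Substituting these gives
\[
 \sR_c(A-\lambda)=\sR_\lambda(A)\cap\sR_\infty(A).
\]
Finally, Lemma~\ref{hannon} yields $\sR_c(A)=\sR_c(A-\lambda)$, and combining this with the previous display produces exactly \eqref{il+}.

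I do not expect any real obstacle here; all the work was done in establishing Lemma~\ref{hannon} (whose proof in turn rests on the transformation Lemma~\ref{CHAin}) and Lemma~\ref{henny}. The only thing worth a sentence of care is the observation that the singular chain space, unlike the individual root spaces $\sR_0(A)$ and $\sR_\lambda(A)$ for $\lambda\neq 0$, does not change under translation, so that intersecting $\sR_\infty(A)$ with the root space at \emph{any} point $\lambda\in\dC$ gives the same set. As a sanity check one notes that taking $\lambda=0$ in \eqref{il+} reproduces Definition~\ref{rca}.
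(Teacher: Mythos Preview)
Your proposal is correct and essentially identical to the paper's own proof: the paper also writes the single chain of equalities $\sR_c(A)=\sR_c(A-\lambda)=\sR_0(A-\lambda)\cap\sR_\infty(A-\lambda)=\sR_\lambda(A)\cap\sR_\infty(A)$, invoking \eqref{shiftrc}, Definition~\ref{rca}, \eqref{rlm}, and \eqref{hen222} in exactly the way you describe. The only difference is the order in which the equalities are strung together, which is immaterial.
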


\begin{proof}
Observe that by \eqref{shiftrc}
one has that
\[
\sR_c(A)=\sR_c(A-\lambda)=\sR_0(A-\lambda) \cap \sR_\infty(A-\lambda)
 =\sR_\lambda(A) \cap \sR_\infty(A),
\]
where the last equality was obtained by using the identities
 \eqref{rlm} and \eqref{hen222}.
\end{proof}

It was shown in \cite{SandDeSn05} that
$\sR_c(A)$ is non-trivial if and only if
$\sR_{\la}(A)\cap\sR_{\mu}(A)$ is non-trivial.
In fact, Proposition \ref{Lem:SingChainMani}
is the stepping stone for the following general result,
namely,
the calculation of the intersection of $\sR_{\la}(A)$ and $\sR_{\mu}(A)$.

\begin{theorem}\label{rootruht}
Let $A$ be a linear relation in $\sH$.
Then one has for $\la, \mu\in\dC\cup\{\infty\}$ with $\la \ne \mu$ that
\[
\sR_{c}(A)=\sR_{\la}(A)\cap\sR_{\mu}(A).
\]
\end{theorem}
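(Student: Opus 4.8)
The plan is to establish the two inclusions separately, using Proposition~\ref{Lem:SingChainMani} as the main tool to reduce everything to the case where one of the two eigenvalues is $0$ or $\infty$. The inclusion $\sR_c(A)\subset\sR_\la(A)\cap\sR_\mu(A)$ is the easy direction: by Proposition~\ref{Lem:SingChainMani} one has $\sR_c(A)=\sR_\nu(A)\cap\sR_\infty(A)\subset\sR_\nu(A)$ for \emph{every} $\nu\in\dC$, and also $\sR_c(A)=\sR_0(A)\cap\sR_\infty(A)\subset\sR_\infty(A)$ directly from Definition~\ref{rca}; combining these gives $\sR_c(A)\subset\sR_\la(A)$ and $\sR_c(A)\subset\sR_\mu(A)$ for any choice of $\la,\mu\in\dC\cup\{\infty\}$, hence the containment in the intersection. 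So the entire content of the theorem is the reverse inclusion $\sR_\la(A)\cap\sR_\mu(A)\subset\sR_c(A)$.

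For the reverse inclusion I would first dispose of the cases involving $\infty$. If $\{\la,\mu\}=\{\nu,\infty\}$ with $\nu\in\dC$, then $\sR_\la(A)\cap\sR_\mu(A)=\sR_\nu(A)\cap\sR_\infty(A)$, which equals $\sR_c(A)$ by Proposition~\ref{Lem:SingChainMani} — so there is literally nothing to prove in that case. This leaves the genuinely new case $\la,\mu\in\dC$ with $\la\ne\mu$. Here I would like to again reduce to a situation covered by Proposition~\ref{Lem:SingChainMani}. Using the shift identity \eqref{hen22} (equivalently \eqref{rlm}), replacing $A$ by $A-\la$ turns the pair $(\la,\mu)$ into $(0,\mu-\la)$ with $\mu-\la\ne 0$, and by Lemma~\ref{hannon} this replacement does not change $\sR_c$. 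So it suffices to show $\sR_0(A)\cap\sR_\nu(A)\subset\sR_c(A)$ for $\nu\in\dC\setminus\{0\}$.

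For this reduced statement the natural move is to pass to the inverse relation $A^{-1}$. By Lemma~\ref{rlm0}, $\sR_\nu(A)=\sR_{\nu^{-1}}(A^{-1})$ and $\sR_0(A)=\sR_\infty(A^{-1})$, so $\sR_0(A)\cap\sR_\nu(A)=\sR_\infty(A^{-1})\cap\sR_{\nu^{-1}}(A^{-1})$; applying Proposition~\ref{Lem:SingChainMani} to $A^{-1}$ (with the finite parameter $\nu^{-1}$) this equals $\sR_c(A^{-1})$. It then remains only to observe that $\sR_c(A^{-1})=\sR_c(A)$: this follows immediately from Definition~\ref{rca} together with the symmetry of the defining chain \eqref{schain} under reversal — reading \eqref{schain} backwards and inverting each pair shows that a singular chain for $A$ is, up to reversing the order, precisely a singular chain for $A^{-1}$, so the two singular chain spaces coincide. (Alternatively, combine both identities in \eqref{hen1}: $\sR_c(A^{-1})=\sR_0(A^{-1})\cap\sR_\infty(A^{-1})=\sR_\infty(A)\cap\sR_0(A)=\sR_c(A)$.) Chaining these equalities gives $\sR_0(A)\cap\sR_\nu(A)=\sR_c(A)$, which is stronger than the needed inclusion and in fact re-proves Proposition~\ref{Lem:SingChainMani} in a symmetric form.

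I do not expect a serious obstacle here: the whole argument is a bookkeeping exercise in combining \eqref{rlm}, Lemma~\ref{hannon}, Lemma~\ref{rlm0}, and Proposition~\ref{Lem:SingChainMani}. The one point that deserves a careful sentence rather than a hand-wave is the identity $\sR_c(A^{-1})=\sR_c(A)$ — although it is intuitively obvious from chain reversal, it is cleanest to derive it formally from \eqref{hen1}, so that the proof rests only on previously displayed identities and not on an informal appeal to the picture. The other mild care point is making sure the case analysis on $\{\la,\mu\}\cap\{\infty\}$ is exhaustive (both in $\dC$, exactly one equal to $\infty$, and the impossible case of both equal to $\infty$ is excluded by $\la\ne\mu$), but that is routine.
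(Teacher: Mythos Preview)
Your proof is correct. Both your argument and the paper's reduce the case $\la,\mu\in\dC$ to Proposition~\ref{Lem:SingChainMani} via a combination of shifting and inverting, but the organization differs. You proceed in two separate steps: first shift $A\mapsto A-\la$ to bring one parameter to $0$, then invert to turn that $0$ into $\infty$ and invoke Proposition~\ref{Lem:SingChainMani} for the inverse relation, finishing with the clean observation $\sR_c(A^{-1})=\sR_c(A)$ obtained from \eqref{hen1}. The paper instead bundles these moves into a single auxiliary relation $\tilde A=(A-\la)^{-1}-(\mu-\la)^{-1}$ and computes the three root spaces $\sR_0(\tilde A)$, $\sR_\infty(\tilde A)$, and $\sR_{-(\mu-\la)^{-1}}(\tilde A)$, matching them to $\sR_\mu(A)$, $\sR_\la(A)$, and $\sR_\infty(A)$ respectively; Proposition~\ref{Lem:SingChainMani} is then applied once to $A$ and once to $\tilde A$. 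Your route is a bit more transparent and avoids both the extra shift and the computation at the auxiliary parameter $-(\mu-\la)^{-1}$; the paper's route has the minor aesthetic advantage of treating $\la$ and $\mu$ symmetrically throughout rather than privileging one of them.
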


\begin{proof}
Due to Proposition \ref{Lem:SingChainMani} it suffices to consider the case
where $\lambda, \mu \in \dC$ and $\lambda \ne \mu$.
With these restrictions define the linear relation $\tilde A$ by
\begin{equation}\label{awurst}
\tilde A=(A-\la)^{-1}-(\mu-\la)^{-1},
\end{equation}
so that the following identities are clear
\begin{equation}\label{aawurst}
\begin{split}
  \sR_{\infty}(\tilde A)
& \stackrel{\eqref{awurst}}{=} \sR_\infty((A-\la)^{-1}-(\mu-\la)^{-1}) \\
& \stackrel{\eqref{hen222}}{=} \sR_\infty((A-\lambda)^{-1})
 \stackrel{\eqref{hen1}}{=} \sR_0(A-\lambda)
 \stackrel{\eqref{rlm}}{=} \sR_{\la}(A).
\end{split}
\end{equation}
By Proposition~\ref{Lem:SingChainMani}
one may write  $\sR_c(\tilde A)=\sR_0(\tilde A) \cap \sR_\infty(\tilde A)$ as
 \begin{equation}\label{B}
 \sR_c(\tilde A)=\sR_{-(\mu-\la)^{-1}}(\tilde A) \cap \sR_\infty(\tilde A),
\end{equation}
since $\lambda \neq \mu$. Now observe that
\begin{equation}\label{aaaawurst}
\begin{split}
 \sR_{-(\mu-\la)^{-1}}(\tilde A)
 &\stackrel{\eqref{awurst}}{=}\sR_{-(\mu-\la)^{-1}}((A-\la)^{-1}-(\mu-\la)^{-1})\\
 & \stackrel{\eqref{rlm}}{=} \sR_{0}((A-\la)^{-1})
 \stackrel{\eqref{hen1}}{=} \sR_{\infty}(A-\lambda)
 \stackrel{\eqref{hen222}}{=} \sR_{\infty}(A).
\end{split}
\end{equation}
 Combining  \eqref{B} with \eqref{aawurst} and \eqref{aaaawurst}  leads to
 \begin{equation}\label{BBq}
 \sR_c(\tilde A)=\sR_\lambda(A) \cap \sR_\infty(A)=\sR_c(A),
\end{equation}
by Proposition \ref{Lem:SingChainMani}.
Moreover, observe that
\begin{equation}\label{aaawurst}
\begin{split}
 \sR_{0}(\tilde A)&
 \stackrel{\eqref{awurst}}{=} \sR_0((A-\la)^{-1}-(\mu-\la)^{-1}) \\
 & \stackrel{\eqref{rlm}}{=} \sR_{(\mu-\la)^{-1}}((A-\lambda)^{-1})
 \stackrel{\eqref{popo}}{=} \sR_{\mu-\la}(A-\lambda)
 \stackrel{\eqref{hen22}}{=} \sR_\mu(A).
\end{split}
\end{equation}
Consequently,  a combination of \eqref{BBq},
the definition of $\sR_c(\tilde A)$,
\eqref{aawurst}, and \eqref{aaawurst} leads to
\[
 \sR_c(A)=\sR_c(\tilde A)=\sR_0(\tilde A) \cap \sR_\infty(\tilde A)
 =\sR_{\la}(A)\cap\sR_{\mu}(A). \qedhere
\]
 \end{proof}

The following theorem is the main result in this section.
It will lead to  a final extension of Theorem~\ref{rootruht}.

\begin{theorem}\label{Lem:Multsum}
Let $A$ be a linear relation in $\sH$ and let the sets
\begin{equation}\label{AA}
\{\la_1,\ldots,\la_l\}\subseteq\dC
\quad \mbox{and} \quad
\{\mu\}\subseteq\dC\cup\{\infty\}
\end{equation}
be disjoint.
Assume that
\begin{equation}\label{BB}
x^r\in\sR_{\la_r}(A),  \quad r=1,\ldots,l,
\quad \mbox{and} \quad
\sum_{r=1}^l x^r\in\sR_{\mu}(A).
\end{equation}
Then $x^r\in\sR_c(A)$ for $r=1,\ldots,l$.
\end{theorem}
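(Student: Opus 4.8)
I would prove this by induction on $l$, the number of distinct finite eigenvalues $\lambda_r$ appearing in the decomposition. The base case $l=1$ is exactly Theorem~\ref{rootruht}: if $x^1\in\sR_{\la_1}(A)$ and $x^1\in\sR_\mu(A)$ with $\la_1\ne\mu$, then $x^1\in\sR_{\la_1}(A)\cap\sR_\mu(A)=\sR_c(A)$. For the inductive step, the key is to produce, from the hypothesis $\sum_{r=1}^l x^r\in\sR_\mu(A)$, a companion relation which ``removes'' the eigenvalue $\la_l$ and converts the problem into one with $l-1$ finite eigenvalues, so that the induction hypothesis applies to $x^1,\dots,x^{l-1}$; then a symmetry argument handles $x^l$.

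\textbf{The mechanism for peeling off an eigenvalue.} Suppose $x^r\in\ker(A-\la_r)^{n_r}$ for each $r$, and set $p(t)=(t-\la_l)^{N}$ for $N$ large enough that $p(A-\la_l+\la_l)$ annihilates $x^l$; more precisely, I want a polynomial operation on $A$ that kills $x^l$ but acts invertibly (in the appropriate root-space sense) on the other $\la_r$ and on $\mu$. The cleanest route is to work as in the proof of Theorem~\ref{rootruht} with a Cayley-type transform. Consider $\tilde A=(A-\la_l)^{-1}-c$ for a suitable constant $c$. By the identities \eqref{hen222}, \eqref{hen1}, \eqref{rlm}, \eqref{popo}, \eqref{hen22} established above, each root space $\sR_{\la_r}(A)$ for $r\ne l$ is carried to a root space $\sR_{\nu_r}(\tilde A)$ at a finite $\nu_r$, the root space $\sR_{\la_l}(A)$ is carried to $\sR_\infty(\tilde A)$, and $\sR_\mu(A)$ is carried to $\sR_{\nu_0}(\tilde A)$ for some $\nu_0$ distinct from all the $\nu_r$ (distinctness is where disjointness of the data in \eqref{AA} is used). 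Under this transform the hypothesis $\sum_r x^r\in\sR_\mu(A)$ becomes: a sum of root-space elements of $\tilde A$ at the finite values $\nu_1,\dots,\nu_{l-1}$, plus one element of $\sR_\infty(\tilde A)$, lies in $\sR_{\nu_0}(\tilde A)$. Rearranging, $\sum_{r=1}^{l-1}x^r \in \sR_{\nu_0}(\tilde A)+\sR_\infty(\tilde A)$. I then need a lemma — to be extracted from the chain transformations of Section~2 together with Theorem~\ref{rootruht} — saying that membership of a finite-root-space sum in $\sR_{\nu_0}(\tilde A)+\sR_\infty(\tilde A)$ forces, after adjusting each $x^r$ by an element of $\sR_c(\tilde A)\subseteq\sR_c(A)$, a genuine equality $\sum_{r=1}^{l-1}\tilde x^r\in\sR_{\nu_0}(\tilde A)$ with $\tilde x^r-x^r\in\sR_c(A)$. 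Applying the induction hypothesis to $\tilde x^1,\dots,\tilde x^{l-1}$ (with eigenvalue $\nu_0$ in the role of $\mu$) gives $\tilde x^r\in\sR_c(A)$, hence $x^r\in\sR_c(A)$ for $r<l$. Finally $x^l=\big(\sum_r x^r\big)-\sum_{r<l}x^r\in\sR_\mu(A)+\sR_c(A)\subseteq\sR_\mu(A)$ on the one hand and $x^l\in\sR_{\la_l}(A)$ on the other, so $x^l\in\sR_{\la_l}(A)\cap\sR_\mu(A)=\sR_c(A)$ by Theorem~\ref{rootruht}.

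\textbf{An alternative, possibly cleaner bookkeeping.} Rather than the Cayley transform, one can argue directly with Jordan chains. Write out the chain \eqref{jchain} for each $x^r$ at $\la_r$ and the chain for $\sum_r x^r$ at $\mu$; apply $(A-\mu)^M$ to the identity $\sum_r x^r = (\text{that endpoint})$ for $M$ larger than the length of the $\mu$-chain, so the right side vanishes, yielding $\sum_r (A-\mu)^M x^r = 0$ as a relation-theoretic statement, i.e. $(0,0)$ is reached from $\sum_r x^r$ by iterating $A-\mu$. Then use that $(A-\mu)^M$ restricted to $\sR_{\la_r}(A)$ is ``injective modulo $\sR_c(A)$'': this is precisely the content one gets by combining the linear-independence statement following \eqref{jchain} with Lemma~\ref{hannon} (uniqueness of chains when $\sR_c=\{0\}$), passed to the quotient $\sH/\sR_c(A)$. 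In that quotient the relation $A$ induces a relation whose finite root spaces at distinct eigenvalues intersect trivially (Theorem~\ref{rootruht}), the images $\bar x^r$ lie in $\sR_{\la_r}$ of the induced relation, their sum lies in $\sR_\mu$, and a standard ``independence of generalized eigenspaces'' argument forces each $\bar x^r=0$, i.e. $x^r\in\sR_c(A)$.

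\textbf{Main obstacle.} The delicate point in either route is the $\infty$ eigenvalue and the multivalued parts: the polynomial/Cayley manipulations that are transparent for operators require care for relations, since $(A-\mu)^M$ for $\mu=\infty$ means passing to $\mul A^M$ and the ``cancellation'' $x^l=(\sum x^r)-\sum_{r<l}x^r$ must be justified at the level of chains, not of operator values. I expect the real work to be setting up the quotient-by-$\sR_c(A)$ argument rigorously (or, equivalently, verifying that the transformed data $\nu_0,\dots,\nu_{l-1}$ remain pairwise distinct and that the ``sum in $\sR_{\nu_0}+\sR_\infty$ implies sum in $\sR_{\nu_0}$ modulo $\sR_c$'' lemma holds), and in making the single-step reduction genuinely reduce $l$ by one without hidden circularity. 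Everything else is an application of Theorem~\ref{rootruht} and the chain transformations of Section~2.
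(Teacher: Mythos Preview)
Your induction-plus-Cayley route has a genuine circularity. After the transform $\tilde A=(A-\la_l)^{-1}-c$ you have $\sum_{r<l}x^r=a-x^l$ with $a\in\sR_{\nu_0}(\tilde A)$ and $x^l\in\sR_\infty(\tilde A)$. Your ``adjustment lemma'' asks for $\tilde x^r$ with $\tilde x^r-x^r\in\sR_c(\tilde A)$ and $\sum_{r<l}\tilde x^r\in\sR_{\nu_0}(\tilde A)$. But then $\sum_{r<l}\tilde x^r-\sum_{r<l} x^r\in\sR_c(\tilde A)\subset\sR_{\nu_0}(\tilde A)$, and since $a,\sum_{r<l}\tilde x^r\in\sR_{\nu_0}(\tilde A)$ one gets $x^l=a-\sum_{r<l}x^r\in\sR_{\nu_0}(\tilde A)$, hence $x^l\in\sR_{\nu_0}(\tilde A)\cap\sR_\infty(\tilde A)=\sR_c(\tilde A)=\sR_c(A)$. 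Conversely, if $x^l\in\sR_c(A)$ the adjustment is trivial (absorb $x^l$ into any one $x^r$). So the lemma you ``need to extract'' is \emph{equivalent} to the conclusion $x^l\in\sR_c(A)$ that you are trying to prove; the transform relabels eigenvalues but does not reduce~$l$. Your quotient approach is more honest about the difficulty, but it is only a sketch: you would still have to define the induced relation on $\sH/\sR_c(A)$, show its singular chain space vanishes, and then prove the theorem in the special case $\sR_c=\{0\}$---where the operator-style argument ``apply $(A-\la_1)^{N}$ to kill $x^1$'' must be justified for a genuinely multivalued $A$.

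The paper sidesteps induction entirely by treating all $l$ eigenvalues at once. For $\mu=\infty$ it uses Lemma~\ref{Lem:Chaintrafo} to convert each Jordan chain for $x^r$ at $\la_r$ into an $A$-chain $(z_m^r)_{m\ge0}$; the sums $w_m=\sum_r z_m^r$ satisfy $w_0=\sum_r x^r\in\sR_\infty(A)$ and $(w_m,w_{m+1})\in A$, so every $w_m\in\sR_\infty(A)$. The map from the chain vectors $x_i^r$ to the $w_m$ is a confluent Vandermonde matrix in the nodes $\la_1,\dots,\la_l$, invertible because the $\la_r$ are distinct; inverting it puts each $x^r$ into $\sR_\infty(A)$, whence $x^r\in\sR_{\la_r}(A)\cap\sR_\infty(A)=\sR_c(A)$. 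The case $\mu\in\dC$ is then reduced to $\mu=\infty$ by the single transform $\tilde A=(A-\mu)^{-1}$. The Vandermonde inversion is the idea your plan is missing.
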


\begin{proof}
Assume the conditions in \eqref{AA} and
let the elements $x^r$, $1 \leq r \leq l$,
satisfy the conditions in \eqref{BB}.
The proof will be given in two steps:
in the first step the case with $\mu=\infty$ will be considered
and in the second step the case with $\mu \in \dC$
will be considered by a reduction to Step 1.

\medskip\noindent
\textit{Step 1}: Assume that $\mu=\infty$. Then \eqref{BB} reads
 \begin{equation}\label{BBe}
x^r\in\sR_{\la_r}(A),  \quad r=1,\ldots,l,
\quad \mbox{and} \quad
\sum_{r=1}^l x^r\in\sR_{\infty}(A).
\end{equation}
By the first condition in \eqref{BBe}: $x^r\in\sR_{\la_r}(A)$,
there exist $k_r\in\dN$ and chains
\[
(x_0^r,x_1^r), \ldots, (x_{k_r-1}^r,x_{k_r}^r), (x_{k_r}^r,0)
\in A-\la_r, \quad r=1,\ldots ,l,
\]
with $x_0^r=x^r$. Set $x_i^r=0$ for $i>k_r$,
then an application of Lemma~\ref{Lem:Chaintrafo}
shows that for all $m \geq 0$ one has with
\[
 z_m^r=\sum_{i=0}^m \begin{pmatrix} m \\ i \end{pmatrix} \la_r^{m-i}x_i^r,
\]
that $z_0^r=  x_0^r=x^r$ and for all $m \geq 0$
 \[
 (z_m^r, z_{m+1}^r) \in A.
 \]
 It is helpful to introduce the notation
\begin{equation}\label{www}
w_m=\sum_{r=1}^l z_m^r, \quad m \geq 0,
\end{equation}
which leads to
 \[
 w_0=\sum_{r=1}^l x^r \in \sR_\infty(A), \quad (w_m,w_{m+1}) \in A.
\]
Therefore, it is clear that $w_m\in\sR_{\infty}(A)$ for all $m\in\dN \cup \{0\}$.

The  statement
 $x^r\in\sR_\infty(A)$ for $r=1,\ldots,l$,
 will now be retrieved from \eqref{www}
 for suitably many indices $m$. For this purpose,
 recall that with   the $(s+1)\times(k_r+1)$
 matrix $C_{s,k_r}(\la_r)$, as defined in \eqref{Bnk},
one may write for any $s \geq 0$:
 \begin{equation}\label{eq:trafo-Bkm+}
 [z_0^r,\ldots,z_s^r]=[x_0^r,\ldots,x_{k_r}^r]\, C_{s,k_r}(\la_r)^\top.
\end{equation}
 In the present context, choose $s \geq 0$ as
$$
s+1=\sum_{r=1}^l \,(k_r+1),
$$
so that, in particular, each $k_i \leq s$.
Observe that, due to Remark \ref{transfo}, the matrix
\begin{equation}\label{vvv}
W=\left( C_{s,k_1}(\la_1),\, \cdots  \,,C_{s,k_l}(\la_l) \right)
 \end{equation}
is, in fact, an $(s+1)\times(s+1)$
confluent Vandermonde matrix.  This matrix  $W$ is invertible,
since it is well-known that
\begin{equation}\label{detvan}
 \det W =
\prod_{1\le i<j\le l} (\la_i-\la_j)^{(k_i+1)(k_j+1)}.
\end{equation}
A short proof of \eqref{detvan} in a more general setting can be found in \cite{HaG80}.
By means of  \eqref{www},  \eqref{eq:trafo-Bkm+}, and \eqref{vvv}, one may write:
\begin{equation}\label{vvvv}
\begin{split}
 [w_0,\ldots,w_s] & =[z_0^1+\dots+z_0^l, \dots, z_s^1+\dots+z_s^l ] \\
 &= \big[x_0^1,\ldots,x_{k_1}^1,\ldots, x_0^l,\ldots,x_{k_l}^l\big]\, W^\top,
\end{split}
\end{equation}
 and it therefore follows from the invertibility of $W$ that all vectors
\[
x_0^1,\ldots,x_{k_1}^1,\ldots, x_0^l,\ldots,x_{k_l}^l
\]
in \eqref{vvvv} are linear combinations of $w_0,\ldots,w_s$
and, hence, they belong to $\sR_{\infty}(A)$.
Moreover, by Proposition~\ref{Lem:SingChainMani} one finds that
\[
x^r=x_0^r \in\sR_{\la_r}(A)\cap\sR_{\infty}(A)=\sR_{c}(A),\quad r=1,\ldots,l.
\]

\medskip\noindent
\emph{Step 2}: Assume that $\mu\in\dC$.
To reduce this case to the situation in Step 1
the linear relation $\tilde A$ is introduced by
\begin{equation}\label{last}
\tilde A=(A-\mu)^{-1}.
\end{equation}
By the assumptions in \eqref{AA}, it is clear that the sets
\begin{equation}\label{AAA}
\left\{ \frac{1}{\la_1-\mu}, \, \cdots \, , \frac{1}{\la_l-\mu}\right\}
\quad \mbox{and} \quad \{\infty\}
\end{equation}
are disjoint.
In addition, one sees that
\begin{equation*}\label{mula1}
\sR_\mu(A) \stackrel{\eqref{rlm}}{=} \sR_0(A-\mu)
\stackrel{\eqref{hen1}}{=} \sR_\infty((A-\mu)^{-1})
\stackrel{\eqref{last}}{=}
\sR_\infty(\tilde A),
\end{equation*}
and for $1 \leq r \leq l$
\begin{equation*}\label{mula2}
\sR_{\la_r}(A) \stackrel{\eqref{hen22}}{=} \sR_{\la_r-\mu}(A-\mu)
\stackrel{\eqref{popo}}{=}
\sR_{(\la_r-\mu)^{-1}}((A-\mu)^{-1})
\stackrel{\eqref{last}}{=}
\sR_{(\la_r-\mu)^{-1}}(\tilde A).
\end{equation*}
Therefore, by the assumptions in \eqref{BB}, one obtains
\begin{equation}\label{BBB}
x^r\in\sR_{(\la_r-\mu)^{-1}}(\tilde A), \quad r=1,\ldots,l,
 \quad \mbox{and} \quad
\sum_{r=1}^l x^r\in \sR_\infty(\tilde A).
\end{equation}
Hence, according to \eqref{AAA} and \eqref{BBB},
Step 1 may be applied when the linear relation $A$
and the scalars $\lambda_r$, $ r=1,\ldots,l$, are replaced by
\[
 \tilde A \quad \mbox{and} \quad \frac{1}{\lambda_r-\mu}, \,\,  r=1,\ldots,l.
\]
This leads to the conclusion that
 $x^r\in\sR_c(\tilde A)$ for $r=1,\ldots,l$. Finally, observe that
\begin{equation}\label{mula3}
\begin{split}
\sR_c(\tilde A) & \stackrel{\eqref{last}}{=} \sR_c((A-\mu)^{-1})
=\sR_0((A-\mu)^{-1})\cap\sR_\infty((A-\mu)^{-1}) \\
& \stackrel{\eqref{hen1}}{=} \sR_0(A-\mu)\cap\sR_\infty(A-\mu)
 =\sR_c(A-\mu)\stackrel{\eqref{shiftrc}}{=}\sR_c(A).
\end{split}
\end{equation}
Thus it follows that $x^r\in\sR_c(A)$  for $r=1,\ldots,l$.
 \end{proof}

There is a special case of Theorem \ref{Lem:Multsum} worth mentioning:
\[
 x^r\in\sR_{\la_r}(A), \,\, r=1,\ldots,l \quad \mbox{and} \quad \sum_{r=1}^l x^r=0
 \quad \Rightarrow \quad x^r\in\sR_c(A), \,\, r=1,\ldots,l.
\]
To see this, observe that  $\sum_{r=1}^l x^r \in\sR_{\mu}(A)$
for any $\mu \in \dC \cup \{\infty\}$ and, in particular, for
any $\mu \in \dC \cup \{\infty\}$ that does not belong to $\{\la_1,\ldots,\la_l\}\subseteq\dC$.

\medskip

Theorem \ref{Lem:Multsum} leads to Corollary \ref{Thm:ModRc}
below, which is an extension Theorem \ref{rootruht}.

 \begin{corollary}\label{Thm:ModRc}
Let $A$ be a linear relation in $\sH$ and let
\begin{equation}\label{cutRc0}
\{\la_1,\ldots,\la_l\} \quad \mbox{and}  \quad \{\mu_1,\ldots,\mu_m\}
\end{equation}
be two disjoint subsets of $\dC\cup\{\infty\}$. Then
\begin{equation}\label{cutRc}
\sR_c(A)
=\left(\sum_{i=1}^l\sR_{\la_i}( A)\right) \bigcap \left(\sum_{j=1}^m\sR_{\mu_j}( A)\right).
\end{equation}
\end{corollary}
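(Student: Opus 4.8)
The plan is to prove the two inclusions in \eqref{cutRc} separately. The inclusion ``$\subseteq$'' is immediate: by Theorem \ref{rootruht} (or already by the observation that $\sR_c(A) \subseteq \sR_\lambda(A)$ for every $\lambda \in \dC \cup \{\infty\}$, which follows from Proposition \ref{Lem:SingChainMani}), one has $\sR_c(A) \subseteq \sR_{\la_i}(A)$ for each $i$ and $\sR_c(A) \subseteq \sR_{\mu_j}(A)$ for each $j$; hence $\sR_c(A)$ is contained in both finite sums, and therefore in their intersection.

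For the reverse inclusion ``$\supseteq$'', take $x$ in the right-hand side of \eqref{cutRc}. Then $x = \sum_{i=1}^l x^i$ with $x^i \in \sR_{\la_i}(A)$, and simultaneously $x = \sum_{j=1}^m y^j$ with $y^j \in \sR_{\mu_j}(A)$. The idea is to expose the single element $x$ as a sum of root-space elements indexed by the \emph{disjoint} families, so that Theorem \ref{Lem:Multsum} can be triggered. First I would dispose of trivial cases: if some $\la_i = \infty$ I relabel so that at most one of the two lists meets $\{\infty\}$; since the two subsets in \eqref{cutRc0} are disjoint, $\infty$ lies in at most one of them, say (after possibly swapping the roles of the $\la$'s and $\mu$'s, which is legitimate by symmetry of \eqref{cutRc}) we may assume $\{\la_1,\dots,\la_l\} \subseteq \dC$. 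Now rewrite the identity $\sum_{i=1}^l x^i - \sum_{j=1}^m y^j = 0$. Grouping, we would like to say: each $x^i \in \sR_{\la_i}(A)$, and $\sum_{i=1}^l x^i = \sum_{j=1}^m y^j \in \sum_{j=1}^m \sR_{\mu_j}(A)$; but the latter is not a single root space, so Theorem \ref{Lem:Multsum} does not apply verbatim. The fix is to pick a fresh scalar.

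The key step is therefore the following. Since $\{\la_1,\dots,\la_l\} \subseteq \dC$ is finite, choose $\mu^\ast \in \dC$ with $\mu^\ast \notin \{\la_1,\dots,\la_l\}$ and also $\mu^\ast \notin \{\mu_1,\dots,\mu_m\}$. Set $\tilde A = (A - \mu^\ast)^{-1}$. Under the correspondence $\lambda \mapsto (\lambda - \mu^\ast)^{-1}$ (with $\infty \mapsto 0$ and $\mu^\ast \mapsto \infty$), the identities \eqref{hen22}, \eqref{rlm}, \eqref{hen1}, \eqref{popo} give $\sR_{\la_i}(A) = \sR_{(\la_i-\mu^\ast)^{-1}}(\tilde A)$ and $\sR_{\mu_j}(A) = \sR_{(\mu_j-\mu^\ast)^{-1}}(\tilde A)$ (interpreting $(\infty - \mu^\ast)^{-1}$ as $0$), and $\sR_c(A) = \sR_c(\tilde A)$ by \eqref{shiftrc} and \eqref{hen1} exactly as in \eqref{mula3}. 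After this change of variables the two new index sets $\{(\la_i-\mu^\ast)^{-1}\}_i$ and $\{(\mu_j-\mu^\ast)^{-1}\}_j$ are still disjoint subsets of $\dC \cup \{\infty\}$, none of the $(\la_i - \mu^\ast)^{-1}$ equals the new ``$\infty$'' (that would force $\la_i = \mu^\ast$), and $0 \notin \{(\la_i-\mu^\ast)^{-1}\}_i$. So, replacing $A$ by $\tilde A$, it suffices to prove the statement under the extra hypothesis that none of the $\la_i$ is $0$ and $0 \notin \{\mu_1,\dots,\mu_m\}$ — wait, more to the point: the real gain is that we may now isolate a scalar not hit by either list. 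Concretely: with $0 \notin \{(\la_i - \mu^\ast)^{-1}\}$, the relation $\sum_i x^i = \sum_j y^j$ can be regrouped, for each \emph{fixed} $j_0$, as
\[
x^1 + \dots + x^l + (-y^1) + \dots + \widehat{(-y^{j_0})} + \dots + (-y^m) \;=\; -y^{j_0} \;\in\; \sR_{\mu_{j_0}}(A),
\]
but the summands $-y^j$ ($j \neq j_0$) lie in root spaces $\sR_{\mu_j}(A)$ whose indices are disjoint from $\{\la_1,\dots,\la_l\}$ yet not from $\{\mu_{j_0}\}$ only trivially — and here the cleanest route is instead to apply Theorem \ref{Lem:Multsum} directly to the single identity $\sum_i x^i = \sum_j y^j$ viewed as ``$\sum_i x^i \in \sR_{\mu^\dagger}(A)$'' where $\mu^\dagger$ is any element disjoint from $\{\la_1,\dots,\la_l\}$: since $\sum_i x^i = \sum_j y^j$ and we may further split each $y^j$ using Theorem \ref{Lem:Multsum} inductively to conclude $y^j \in \sR_c(A)$, hence $\sum_j y^j \in \sR_c(A) \subseteq \sR_{\mu^\dagger}(A)$ for \emph{every} $\mu^\dagger$, so in particular for $\mu^\dagger \notin \{\la_1,\dots,\la_l\}$; then Theorem \ref{Lem:Multsum} yields $x^i \in \sR_c(A)$ for all $i$, whence $x = \sum_i x^i \in \sR_c(A)$. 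To make the inductive splitting of the $y^j$ rigorous, observe that for each fixed $j_0$ the sum $\sum_{j \neq j_0} (-y^j) + \sum_i x^i = -y^{j_0}$ expresses an element of $\sum_{j\neq j_0}\sR_{\mu_j}(A) + \sum_i \sR_{\la_i}(A)$ as lying in $\sR_{\mu_{j_0}}(A)$; since $\mu_{j_0}$ is disjoint from all the indices $\mu_j$ ($j\neq j_0$) and $\la_i$, Theorem \ref{Lem:Multsum} applies (after a harmless relabelling to put all the indices into one list disjoint from $\{\mu_{j_0}\}$) and gives $y^j \in \sR_c(A)$ for $j \neq j_0$; running over all $j_0$ gives $y^{j} \in \sR_c(A)$ for every $j$, and then Theorem \ref{Lem:Multsum} applied to $\sum_i x^i = \sum_j y^j \in \sR_c(A) \subseteq \sR_{\mu^\dagger}(A)$ finishes the argument.

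The main obstacle is the bookkeeping: Theorem \ref{Lem:Multsum} is stated for one distinguished index $\mu$ versus a finite list of $\la$'s, so to handle a sum of $\mu_j$'s on the right one must apply it $m$ times, each time peeling off one index and treating the remaining $\mu_j$'s together with the $\la_i$'s as the ``$\la$-list''. The one subtlety to check carefully is that at each application the distinguished index is genuinely outside the combined list — which holds precisely because the two sets in \eqref{cutRc0} are disjoint — and that the case $\infty$ is parked in at most one of the two lists so that after the Cayley-type substitution $\lambda \mapsto (\lambda - \mu^\ast)^{-1}$ the hypotheses of Theorem \ref{Lem:Multsum} (a finite list of \emph{complex} $\la_r$'s, disjoint from the distinguished index) are literally met. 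Everything else is a routine transcription of the identities \eqref{hen22}, \eqref{rlm}, \eqref{hen1}, \eqref{popo}, \eqref{shiftrc} already established.
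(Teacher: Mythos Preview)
Your argument is correct in substance and rests on the same engine (Theorem~\ref{Lem:Multsum}) as the paper, but it takes several detours that the paper avoids. The paper's proof is a single stroke: since the two index sets in~\eqref{cutRc0} are disjoint, at most one of the indices $\la_1,\dots,\la_l,\mu_1,\dots,\mu_m$ equals~$\infty$; relabel so that this index (if it occurs at all) is $\la_1$. Then from $x=\sum_i y_i=\sum_j z_j$ one gets
\[
\sum_{j=1}^m z_j \;+\; \sum_{i=2}^l(-y_i) \;=\; y_1 \;\in\; \sR_{\la_1}(A),
\]
and the left-hand summands lie in root spaces indexed by $\{\mu_1,\dots,\mu_m,\la_2,\dots,\la_l\}\subset\dC$, a set disjoint from $\{\la_1\}$. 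A \emph{single} application of Theorem~\ref{Lem:Multsum} now puts every $z_j$ and every $y_i$ ($i\ge 2$) into $\sR_c(A)$, whence $y_1\in\sR_c(A)$ as well, and $x\in\sR_c(A)$.

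This choice of which index to isolate---namely the one that might be $\infty$---is the whole trick, and it makes your Cayley substitution $\tilde A=(A-\mu^\ast)^{-1}$ unnecessary. Moreover, in your final ``peel off $\mu_{j_0}$'' step, Theorem~\ref{Lem:Multsum} already delivers $x^i\in\sR_c(A)$ for \emph{all} $i$ (not only $y^j\in\sR_c(A)$ for $j\ne j_0$), so the loop over $j_0$ and the closing extra application are redundant. Finally, a small slip: in your displayed regrouping the right-hand side should be $y^{j_0}$, not $-y^{j_0}$.
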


 \begin{proof}
It is clear from Theorem \ref{rootruht} that the left-hand side of \eqref{cutRc}
is contained in the right-hand side. Therefore it suffices to show that
the right-hand side of \eqref{cutRc} is contained in the left-hand side.
Let $x$ be an element of the space on the right-hand side of~\eqref{cutRc},
which means that $x$ can be written as
\[
x=\sum_{i=1}^l y_i=\sum_{j=1}^m z_j \quad \mbox{with} \quad
y_i\in\sR_{\la_i}(A), \,\,z_j\in\sR_{\mu_j}(A).
\]
In the sets in \eqref{cutRc0} at most one element equals $\infty$. If this is the case, let it be
$\lambda_1$ without loss of generality.  In all cases, it is clear that
\[
 \sum_{j=1}^m z_j -\sum_{i=2}^l y_i =y_1 \in \sR_{\la_1}(A).
\]
Therefore, by  Theorem~\ref{Lem:Multsum} it follows that  $z_j \in \sR_c(A)$, $j=1, \dots,   m$, and $y_i \in \sR_c(A)$, $i=2, \dots, l$, and consequently, also $y_1 \in \sR_c(A)$. Thus one sees that $x \in \sR_c(A)$.
One concludes that the right-hand side of \eqref{cutRc} is contained in the left-hand side.
 Thus the identity \eqref{cutRc} has been proved.
\end{proof}

\section{The proper point spectrum of a linear relation}

The number $\lambda \in \dC$ is an \emph{eigenvalue}
of  $A$, if $\ker (A-\la) \neq \{0\}$, and $\infty$ is
an \emph{eigenvalue} of~$A$, if $\mul A \neq \{0\}$.
The usual \emph{point spectrum} $\sigma_p(A)$
is the set of all eigenvalues $\la\in\dC\cup\{\infty\}$ of the relation $A$:
\begin{equation}\label{ook333-}
 \sigma_p(A)=\{ \lambda \in \dC \cup \{\infty\} :\, \text{$\lambda$ is an eigenvalue of $A$} \, \}.
\end{equation}
It follows from \eqref{il+} that
\begin{equation}\label{il++}
\sR_c(A) \subset \sR_\lambda(A), \quad \lambda \in \dC \cup \{\infty\}.
\end{equation}
Thus, by \eqref{il++}, the following implication is trivial:
 \begin{equation}\label{il+++}
\sR_c(A) \neq \{0\} \quad \implies \quad \sigma_p(A) =\dC\cup\{\infty\}.
\end{equation}
Note that the present geometric treatment takes care of
\cite[Proposition 3.2, Corollary 3.3, Corollary 3.4]{SandDeSn05}.
The following definition is based on the inclusion \eqref{il++}.

\begin{definition}\label{Proper!!!}
Let $A$ be a linear relation in a linear space $\sH$.
The \textit{proper point spectrum} $\sigma_{\pi}(A)$
is a subset of the point spectrum $\sigma_{p}(A)$,
defined for $\lambda \in \dC \cup \{\infty\}$ by
\begin{equation}\label{ook333}
   \lambda \in\sigma_{\pi}(A)
   \quad \iff
   \quad
    \sR_{\lambda}(A) \setminus \sR_c(A)  \neq\emptyset.
\end{equation}
The elements in $\sigma_\pi(A)$ are called the
\textit{proper eigenvalues} of $A$.
\end{definition}

Note that if $\sR_c(A)=\{0\}$, then $\sigma_{\pi}(A)=\sigma_{p}(A)$.
 The following result is a direct consequence of Corollary \ref{Thm:ModRc}.

\begin{theorem}\label{Prop:FinSpec-KronForm}
Let $A$ be a linear relation in a linear space $\sH$.
Assume that there exists $k\in\N$ such that
$\lambda_1,\ldots,\lambda_k \in \sigma_{\pi}(A)$
are pairwise distinct proper eigenvalues and let
\[
x_i \in \sR_{\lambda_i}(A) \setminus \sR_c(A),\quad i=1, \ldots, k.
\]
Then the elements $x_1, \ldots, x_k$ are linearly independent in $\sH$.
 \end{theorem}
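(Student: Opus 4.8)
The plan is to argue by contradiction using Corollary~\ref{Thm:ModRc}. Suppose the elements $x_1,\ldots,x_k$ were linearly dependent. Then some nontrivial linear combination vanishes; after discarding terms with zero coefficient and relabelling, we may assume there is an index set with at least two elements, say $\{1,\ldots,p\}$ with $p\geq 2$, and nonzero scalars $c_1,\ldots,c_p$ such that $\sum_{i=1}^p c_i x_i = 0$. Absorbing the scalars (note $\sR_{\lambda_i}(A)$ is a linear subspace and $c_i \neq 0$, so $c_i x_i \in \sR_{\lambda_i}(A)\setminus\sR_c(A)$ still), we obtain $x_1',\ldots,x_p'$ with $x_i' \in \sR_{\lambda_i}(A)\setminus\sR_c(A)$ and $\sum_{i=1}^p x_i' = 0$.

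Next I would invoke the special case of Theorem~\ref{Lem:Multsum} highlighted in the text right after its proof: if $y^r \in \sR_{\lambda_r}(A)$ for $r=1,\ldots,l$ and $\sum_{r=1}^l y^r = 0$, then each $y^r \in \sR_c(A)$. (This follows since $\sum y^r = 0 \in \sR_\mu(A)$ for any $\mu \in \dC\cup\{\infty\}$, and one can always choose such a $\mu$ outside the finite set $\{\lambda_1,\ldots,\lambda_l\}\cap\dC$; the hypothesis of Theorem~\ref{Lem:Multsum} that the two index sets be disjoint is then met.) Applying this with $l=p$ and $y^r = x_r'$ gives $x_r' \in \sR_c(A)$ for every $r=1,\ldots,p$, contradicting $x_1' \notin \sR_c(A)$. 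Hence no such dependence exists, and $x_1,\ldots,x_k$ are linearly independent.

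The only subtlety — and the one point that needs care rather than being purely routine — is the reduction step: after taking a nontrivial dependence one must ensure at least two coefficients survive, which holds because each $x_i \notin \sR_c(A)$ and in particular $x_i \neq 0$, so a single-term relation $c_i x_i = 0$ with $c_i \neq 0$ is impossible; and one must check that scaling by a nonzero constant does not move an element into $\sR_c(A)$, which is immediate from linearity of $\sR_c(A)$. With $p\geq 2$ secured the disjointness hypothesis of Theorem~\ref{Lem:Multsum} is available (choosing $\mu \in \dC\cup\{\infty\}$ avoiding the finitely many $\lambda_i$), and the argument closes. No infinite-dimensionality issue arises since only finitely many elements and root spaces are involved.
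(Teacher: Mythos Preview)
Your argument is essentially the same contradiction strategy as the paper's, and the idea is sound. There is, however, one technical slip. You invoke Theorem~\ref{Lem:Multsum} (via its ``sum equals zero'' special case), but that theorem carries the hypothesis $\{\lambda_1,\ldots,\lambda_l\}\subseteq\dC$, whereas proper eigenvalues are allowed to lie in $\dC\cup\{\infty\}$ (Definition~\ref{Proper!!!}). Your parenthetical about choosing $\mu$ outside $\{\lambda_1,\ldots,\lambda_l\}\cap\dC$ does not repair this: the disjointness of $\{\mu\}$ from the $\lambda_i$'s is not the issue, the issue is that the first set in Theorem~\ref{Lem:Multsum} must consist of finite numbers, and if some $\lambda_i=\infty$ that hypothesis fails.

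The fix is immediate, and you have two options. First, simply use Corollary~\ref{Thm:ModRc} as you announced in your opening sentence: it allows both index sets to sit in $\dC\cup\{\infty\}$, and with the singleton $\{\lambda_p\}$ on one side and $\{\lambda_1,\ldots,\lambda_{p-1}\}$ on the other you get $x_p'\in\sR_c(A)$ directly --- this is exactly what the paper does. Second, if you prefer to stay with Theorem~\ref{Lem:Multsum}: when $\infty$ occurs among the $\lambda_i$, say $\lambda_p=\infty$, rewrite the relation as $\sum_{r=1}^{p-1}x_r'=-x_p'\in\sR_\infty(A)$ and apply Theorem~\ref{Lem:Multsum} with $\mu=\infty$ and the (now finite) $\lambda_1,\ldots,\lambda_{p-1}$; this yields $x_r'\in\sR_c(A)$ for $r<p$ and hence also for $r=p$. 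Either way the proof closes. Incidentally, the care you take to secure $p\geq2$ is unnecessary: even $p=1$ would give $x_1'=0\in\sR_c(A)$, already a contradiction.
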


\begin{proof}
Seeking a contradiction, assume that the elements $x_1, \ldots, x_k$
are linearly dependent.
In fact, assume without loss of generality that
\[
\sum_{i=1}^k c_i x_i=0 \quad \mbox{with} \quad  c_i\in\dC,\,\, i=1,\ldots,k,
\]
and that $c_k \neq 0$. Then, clearly,
\[
    c_k x_{k} =
    -\sum_{i=1}^{k-1} c_i x_i \in \sR_{\lambda_{k}}(A)
    \cap \spn \setdef{\,\sR_{\lambda_i}(A)}{i=1,\ldots,k-1\,}.
\]
Since  $c_k\neq 0$, this implies, by Corollary \ref{Thm:ModRc},
that $ x_{k} \in \sR_c(A)$, a contradiction.
Hence $x_1, \ldots, x_k$ are linearly independent.
 \end{proof}

If the linear space $\sH$ is finite-dimensional,
then there is a bound for the number of  proper eigenvalues
in $\sigma_\pi(A)$, as follows
from Theorem \ref{Prop:FinSpec-KronForm}.

\begin{corollary}\label{lood1}
Let $A$ be a linear relation in a  finite-dimensional space $\sH$. Then
\begin{equation}\label{ook1}
    |\sigma_{\pi}(A) |\le \dim \sH,
\end{equation}
so that, in particular, $\sigma_{\pi}(A)$ consists of finitely many elements.
\end{corollary}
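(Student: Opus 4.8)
The plan is to derive the bound directly from Theorem \ref{Prop:FinSpec-KronForm} by a dimension-counting argument. First I would suppose, seeking a contradiction, that $\sigma_\pi(A)$ contains more than $\dim \sH$ elements; in particular, if $k := \dim \sH$, then there exist pairwise distinct proper eigenvalues $\lambda_1, \ldots, \lambda_{k+1} \in \sigma_\pi(A)$. By the very definition of the proper point spectrum in \eqref{ook333}, for each $i = 1, \ldots, k+1$ one may choose an element $x_i \in \sR_{\lambda_i}(A) \setminus \sR_c(A)$, since that set is nonempty.

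Next I would apply Theorem \ref{Prop:FinSpec-KronForm} to these $k+1$ elements: since $\lambda_1, \ldots, \lambda_{k+1}$ are pairwise distinct proper eigenvalues and $x_i \in \sR_{\lambda_i}(A) \setminus \sR_c(A)$, the theorem guarantees that $x_1, \ldots, x_{k+1}$ are linearly independent in $\sH$. But $\sH$ has dimension $k$, so it cannot contain $k+1$ linearly independent vectors — a contradiction. Hence $|\sigma_\pi(A)| \le \dim \sH$, which is \eqref{ook1}. The final sentence of the corollary, that $\sigma_\pi(A)$ is finite, is then immediate since $\dim \sH < \infty$.

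There is essentially no obstacle here: the work has all been done in Theorem \ref{Prop:FinSpec-KronForm}, and this corollary is a one-line consequence. The only point to be slightly careful about is the phrasing of the contradiction — one should note that Theorem \ref{Prop:FinSpec-KronForm} applies to \emph{any} finite collection of pairwise distinct proper eigenvalues, so in particular to a hypothetical collection of size $\dim \sH + 1$; the hypothesis $k \in \N$ in that theorem is then satisfied with $k = \dim \sH + 1$. No delicate estimates or constructions are needed.

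\begin{proof}
Suppose, to the contrary, that $|\sigma_\pi(A)| > \dim \sH$. Set $k = \dim \sH + 1$ and
choose pairwise distinct proper eigenvalues $\lambda_1, \ldots, \lambda_k \in \sigma_\pi(A)$.
By \eqref{ook333}, for each $i = 1, \ldots, k$ the set $\sR_{\lambda_i}(A) \setminus \sR_c(A)$
is nonempty, so one may pick
\[
x_i \in \sR_{\lambda_i}(A) \setminus \sR_c(A), \quad i = 1, \ldots, k.
\]
By Theorem \ref{Prop:FinSpec-KronForm}, the elements $x_1, \ldots, x_k$ are linearly
independent in $\sH$. This is impossible, since $\dim \sH = k - 1$. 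Therefore
$|\sigma_\pi(A)| \le \dim \sH$, which is \eqref{ook1}; in particular, $\sigma_\pi(A)$
consists of finitely many elements.
\end{proof}
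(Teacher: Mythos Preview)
Your proof is correct and matches the paper's approach exactly: the paper states the corollary without a separate proof, remarking just before the statement that the bound ``follows from Theorem \ref{Prop:FinSpec-KronForm}'' --- and your contradiction argument is precisely the intended one-line deduction.
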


The following result is a simple consequence of
Theorem \ref{Prop:FinSpec-KronForm} and Corollary \ref{lood1};
it goes back to \cite[Lemma 4.5, Theorem 5.1]{SandDeSn05}.

\begin{corollary}
Let $A$ be a linear relation in a finite-dimensional space $\sH$
and assume that $\sR_c(A)=\{0\}$.
Let $\lambda_i \in \sigma_{p}(A)$ be pairwise distinct for $i=1, \ldots, k$,
and let $x_i \in \sR_{\lambda_i}(A)$ be nontrivial.
Then the elements $x_1, \ldots, x_k$ are linearly independent and, consequently,
$|\sigma_{p}(A) |\le \dim \sH$.
\end{corollary}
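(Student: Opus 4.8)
The plan is to derive this corollary directly from the preceding two results, treating it as a packaging of Theorem~\ref{Prop:FinSpec-KronForm} (linear independence of proper-eigenvector representatives) together with Corollary~\ref{lood1} (finiteness of the proper point spectrum) in the special situation $\sR_c(A)=\{0\}$. The key observation is that when $\sR_c(A)=\{0\}$, the remark immediately following Definition~\ref{Proper!!!} tells us that $\sigma_\pi(A)=\sigma_p(A)$, so every ordinary eigenvalue is a proper eigenvalue and every nonzero root vector lies in $\sR_{\lambda_i}(A)\setminus\sR_c(A)$.

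First I would fix pairwise distinct $\lambda_1,\ldots,\lambda_k\in\sigma_p(A)$ and nontrivial $x_i\in\sR_{\lambda_i}(A)$. Since $\sR_c(A)=\{0\}$ and each $x_i\neq 0$, we have $x_i\in\sR_{\lambda_i}(A)\setminus\sR_c(A)$, and $\sigma_\pi(A)=\sigma_p(A)$ gives $\lambda_i\in\sigma_\pi(A)$ for each $i$. Thus the hypotheses of Theorem~\ref{Prop:FinSpec-KronForm} are met, and that theorem yields directly that $x_1,\ldots,x_k$ are linearly independent in $\sH$. This is the substantive half of the statement, and it requires essentially no new work beyond the translation just described.

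For the cardinality bound, I would note that linear independence of $k$ vectors in the finite-dimensional space $\sH$ forces $k\le\dim\sH$. To conclude $|\sigma_p(A)|\le\dim\sH$ one applies this with the $\lambda_i$ ranging over all of $\sigma_p(A)$: a priori one should check $\sigma_p(A)$ is finite, but this is already guaranteed by Corollary~\ref{lood1} together with $\sigma_\pi(A)=\sigma_p(A)$, so one may simply enumerate $\sigma_p(A)=\{\lambda_1,\ldots,\lambda_k\}$, pick one nonzero representative $x_i\in\sR_{\lambda_i}(A)$ for each (possible since each $\lambda_i$ is an eigenvalue, hence $\ker(A-\lambda_i)\neq\{0\}$ or $\mul A\neq\{0\}$), invoke the linear independence just proved, and read off $k\le\dim\sH$.

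There is essentially no obstacle here: the only point demanding a moment's care is making the reduction $\sigma_\pi(A)=\sigma_p(A)$ explicit and confirming that a nonzero root vector is automatically outside $\sR_c(A)=\{0\}$, after which both assertions are immediate corollaries of the cited results. I would write the proof in three or four lines accordingly.
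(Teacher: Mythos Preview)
Your proposal is correct and matches the paper's own approach: the paper does not write out a proof but simply states that this corollary is a direct consequence of Theorem~\ref{Prop:FinSpec-KronForm} and Corollary~\ref{lood1}, and you have spelled out exactly that reduction, using the observation $\sigma_\pi(A)=\sigma_p(A)$ when $\sR_c(A)=\{0\}$ from the remark after Definition~\ref{Proper!!!}.
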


\medskip
The implication in \eqref{il+++} can be reversed if the space is finite-dimensional;
cf. \cite[Theorem 4.4]{SandDeSn05}.
 A proof is included for completeness.

\begin{proposition}
Let $A$ be a linear relation in $\sH$ and let $\sH$ be finite-dimensional. Then
\begin{equation}\label{AmazonPrime}
\sR_c(A) \neq \{0\} \quad \iff \quad \sigma_p(A) =\dC\cup\{\infty\}.
\end{equation}
\end{proposition}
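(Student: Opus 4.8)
The plan is to prove the nontrivial implication, namely that $\sigma_p(A) = \dC \cup \{\infty\}$ forces $\sR_c(A) \neq \{0\}$; the reverse implication is precisely \eqref{il+++}. The key tool is Theorem~\ref{Prop:FinSpec-KronForm}: if $\sR_c(A) = \{0\}$, then $\sigma_\pi(A) = \sigma_p(A)$, and picking one nontrivial eigenvector $x_i \in \sR_{\lambda_i}(A)$ for each of $k$ pairwise distinct eigenvalues $\lambda_1, \ldots, \lambda_k$ yields $k$ linearly independent vectors in $\sH$. Hence $k \le \dim \sH$, so $\sigma_p(A)$ is finite.

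First I would argue by contraposition: assume $\sR_c(A) = \{0\}$ and show $\sigma_p(A) \ne \dC \cup \{\infty\}$. Under the assumption $\sR_c(A) = \{0\}$, Definition~\ref{Proper!!!} gives $\sigma_\pi(A) = \sigma_p(A)$ (since $\sR_\lambda(A) \setminus \{0\} \ne \emptyset$ exactly when $\lambda$ is an eigenvalue). Then Corollary~\ref{lood1} applies directly: $|\sigma_p(A)| = |\sigma_\pi(A)| \le \dim \sH < \infty$. Since $\dC \cup \{\infty\}$ is infinite, $\sigma_p(A) \ne \dC \cup \{\infty\}$. This establishes $\sigma_p(A) = \dC \cup \{\infty\} \implies \sR_c(A) \ne \{0\}$, and combined with \eqref{il+++} proves the equivalence \eqref{AmazonPrime}.

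There is essentially no obstacle here: the substance has already been carried out in Theorem~\ref{Prop:FinSpec-KronForm} and Corollary~\ref{lood1}, so the proof is a two-line deduction. The only point requiring a moment's care is the observation that $\sR_c(A) = \{0\}$ forces $\sigma_\pi(A) = \sigma_p(A)$, which is the remark recorded immediately after Definition~\ref{Proper!!!}; with that in hand the argument is immediate. One could alternatively phrase it without invoking Corollary~\ref{lood1} explicitly, by noting that if every point of $\dC \cup \{\infty\}$ were an eigenvalue then choosing $\dim \sH + 1$ distinct ones would contradict Theorem~\ref{Prop:FinSpec-KronForm}, but routing through Corollary~\ref{lood1} is cleanest.

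\begin{proof}
The implication ``$\Rightarrow$'' is \eqref{il+++}.
For the converse, assume $\sR_c(A) = \{0\}$; it suffices to show that
$\sigma_p(A) \ne \dC \cup \{\infty\}$.
Since $\sR_c(A) = \{0\}$, one has $\sR_\lambda(A) \setminus \sR_c(A) = \sR_\lambda(A) \setminus \{0\}$,
which is nonempty precisely when $\lambda$ is an eigenvalue of $A$;
hence $\sigma_\pi(A) = \sigma_p(A)$ by Definition~\ref{Proper!!!}.
As $\sH$ is finite-dimensional, Corollary~\ref{lood1} yields
\[
 |\sigma_p(A)| = |\sigma_\pi(A)| \le \dim \sH < \infty.
\]
Since $\dC \cup \{\infty\}$ is infinite, it follows that $\sigma_p(A) \ne \dC \cup \{\infty\}$.
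This proves the equivalence \eqref{AmazonPrime}.
\end{proof}
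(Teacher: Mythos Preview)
Your proof is correct. The approach is essentially the same as the paper's, pivoting on the fact that in a finite-dimensional space there cannot be infinitely many eigenvalues whose root spaces are independent modulo $\sR_c(A)$. The only cosmetic difference is that you argue by contraposition via Corollary~\ref{lood1}, whereas the paper argues directly: it picks $\dim\sH+1$ distinct eigenvalues with nontrivial eigenvectors, observes these must be linearly dependent, and then applies Theorem~\ref{Lem:Multsum} to the resulting dependence relation to exhibit a nonzero element of $\sR_c(A)$.
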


\begin{proof}
($\Rightarrow$) This is  \eqref{il+++}.

($\Leftarrow$)
Assume that $\sigma_p(A) =\dC\cup\{\infty\}$.
By assumption, $\sH$ is finite-dimensional,
say, $\dim \sH=m$. Therefore, let $\lambda_1, \dots, \lambda_{m+1}$
in $\dC$ be different eigenvalues of $A$
and let  $(x_i, \lambda_i x_{i}) \in A$
with nontrivial $x_i \in \sH$, $1 \leq i \leq m+1$.
 Then $x_1, \dots, x_{m+1}$ are linearly dependent
and there exist $c_i \in \dC$ such that
\[
 \sum_{i=1}^{m+1} c_i x_i=0, \quad \sum_{i=1}^{m+1} |c_i| >0.
\]
Clearly, $c_i x_i\in  \sR_{\lambda_i}(A)$ and one may choose
 $\mu\in\C$ with $\mu\neq \lambda_i$ for $i=1,\ldots,m+1$
 so that $\sum_{i=1}^{m+1} c_i x_i=0 \in \sR_{\mu}(A)$.
 Then Theorem~\ref{Lem:Multsum} implies that $c_i x_i \in\sR_c(A)$
 for all $i=1,\ldots,m+1$, hence $\sR_c(A) \neq \{0\}$.
\end{proof}

\section{Matrix pencils and linear relations}\label{pencil}

Linear relations naturally appear in the study of matrix pencils.
Let $E$ and $F$ be matrices in $\mathbb C^{m\times d}$
and consider the associated matrix pencil $sE-F$,
which is a polynomial matrix in $\dC[s]^{m\times d}$.
Then it is natural to consider the corresponding operator range
\begin{equation}\label{penci}
\cA = \ran
\begin{pmatrix} E\\ F\end{pmatrix},
\end{equation}
so that $\cA$ is a linear relation in the space $\sH=\dC^m$.
There is a close connection in terms of the root spaces and singular chain spaces of $\cA$
and the pencil $sE-F$, which will be described in the following by means of
 the \textit{Kronecker canonical form} for linear matrix pencils,
see e.g.~\cite{BergTren12,BergTren13,G59}.
For this purpose, define for $k\in\mathbb N$ the matrices $N_k$, $K_k$, and $L_k$ by
\[
N_k:=\left(\begin{array}{cccc}
0&&&\\
1&0&&\\
&\ddots&\ddots&\\
&&1&0
\end{array}
\right)\in\mathbb C^{k\times k},
\]
and
\[
K_k:=\left(\begin{array}{cccc}
1 & 0 &&\\
& \ddots& \ddots&\\
&&1&0
\end{array}
\right),\quad
L_k:=\left(\begin{array}{cccc}
0 & 1 &&\\
& \ddots& \ddots&\\
&&0&1
\end{array}
\right)\in \dC^{(k-1)\times k}.
\]
 Likewise, for a multi-index  $\alpha=(\alpha_1,\ldots,\alpha_l)\in\N^l$ with absolute value $|\alpha| = \sum_{i=1}^l \alpha_i$, define
\begin{align*}
N_{\alpha}&:=\diag(N_{\alpha_1},\ldots,N_{\alpha_l})\in\mathbb C^{|\alpha|\times |\alpha|},\\
K_\alpha&:= \diag(K_{\alpha_1},\ldots,K_{\alpha_l}),\quad
L_\alpha:= \diag(L_{\alpha_1},\ldots,L_{\alpha_l})\in\C^{(|\alpha|-l) \times |\alpha|}.
\end{align*}
See, e.g.,~\cite{BTW16} for a discussion of the case
that some or all of the entries of~$\alpha$ are equal to one.

According to Kronecker~\cite{K90},
there exist invertible matrices $W\in\mathbb C^{m\times m}$
and $T\in\dC^{d\times d}$ such that
\begin{align}
\label{kcf}
W(sE-F)T =
\begin{pmatrix} sI_{n_0}-A_0& 0&0&0 \\0& sN_\alpha-I_{|\alpha|}&0&0\\ 0&0& sK_{\veps}-L_{\veps} &0\\ 0&0&0& sK_{\eta}^\top-L_{\eta}^\top\end{pmatrix},
\end{align}
for some $A_0\in\mathbb C^{n_0\times n_0}$ in Jordan canonical form,
and multi-indices $\alpha$, $\veps$, $\eta$, with lengths $n_\alpha$, $n_\veps$, $n_\eta$,
respectively,
ordered non-decreasingly. For details, see~\cite[Chapter XII]{G59} or~\cite{KM06}.

The following theorem shows how the singular chain space and the root spaces
together with the proper eigenvalues can be read off from the Kronecker canonical form.
In this sense the Kronecker canonical form can be seen as a canonical form for the linear
relation in~\eqref{penci}.
Vice versa,  this provides a simple
geometric interpretation for the four parts in the Kronecker canonical form.

\begin{theorem}\label{afectado}
Let $E$ and $F$ be matrices in $\mathbb C^{m\times d}$ with Kronecker canonical form \eqref{kcf}
and let the linear relation $\cA$ be defined by \eqref{penci}.
Then the following statements hold:
\begin{enumerate} \def\labelenumi{\rm(\roman{enumi})}
\item
the singular chain space $\sR_c(\cA)$ is given by
\begin{equation*}
     \sR_c(\cA) = W^{-1} \left( \{0\}^{n_0} \times \{0\}^{|\alpha|}
\times \mathbb C^{|\veps|-n_\veps} \times \{0\}^{|\eta|}\right);
\end{equation*}
\item
 the root space $\sR_\infty(\cA)$ is given by
\begin{equation*}
     \sR_\infty(\cA) = W^{-1} \left( \{0\}^{n_0} \times \mathbb C^{|\alpha|}
\times  \mathbb C^{|\veps|-n_\veps} \times \{0\}^{|\eta|}\right);
\end{equation*}
\item
the root space $\sR_\lambda(\cA)$ for some $\lambda \in \mathbb C$ is given by
\begin{equation*}
     \sR_\lambda(\cA) = W^{-1} \left(\sR_\lambda(A_0) \times \{0\}^{|\alpha|}
\times  \mathbb C^{|\veps|-n_\veps} \times \{0\}^{|\eta|}\right),
\end{equation*}
where $\sR_\lambda(A_0)$ is the root space for $\lambda \in \dC$
of the matrix $A_0$ in
\eqref{kcf};
\item
the proper eigenvalues of $\cA$ are given by
\[
 \sigma_\pi(\cA)=\left\{
 \begin{array}{ll}
 \sigma_p(A_0)\cup\{\infty\},& \mbox{if }\, |\alpha| \neq 0,\\[1ex]
 \sigma_p(A_0),& \mbox{if }\, |\alpha| = 0,
 \end{array}
 \right.
\]
where $\sigma_p(A_0)$ is the point spectrum
of the matrix $A_0$ in
\eqref{kcf}.
\end{enumerate}
\end{theorem}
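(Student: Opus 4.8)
The plan is to strip the invertible factors from~\eqref{kcf} and then to analyse the four Kronecker blocks separately. First I would note that left multiplication by $W$ induces the bijection $(u,v)\mapsto(Wu,Wv)$ of $\sH\times\sH$, which sends $\cA=\ran\begin{smallbmatrix}E\\F\end{smallbmatrix}$ to $\wh\cA:=\ran\begin{smallbmatrix}WE\\WF\end{smallbmatrix}=\ran\begin{smallbmatrix}WET\\WFT\end{smallbmatrix}$, the last equality because an invertible right factor does not change a range. Being implemented by the invertible operator $W$ on $\sH$, this bijection intertwines $A\mapsto A-\lambda$ and $A\mapsto A^k$ and commutes with forming kernels and multivalued parts; hence $\sR_\lambda(\wh\cA)=W\sR_\lambda(\cA)$, $\sR_\infty(\wh\cA)=W\sR_\infty(\cA)$, $\sR_c(\wh\cA)=W\sR_c(\cA)$, and $\sigma_\pi(\wh\cA)=\sigma_\pi(\cA)$. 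So it suffices to establish (i)--(iv) for $\wh\cA$, i.e., for the relation attached to the block-diagonal pencil on the right of~\eqref{kcf}.

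Next I would split $\sH=\dC^{n_0}\oplus\dC^{|\alpha|}\oplus\dC^{|\veps|-n_\veps}\oplus\dC^{|\eta|}$ along the rows of~\eqref{kcf}. Since $WET$ and $WFT$ are block diagonal, $\wh\cA$ is the direct sum $\cA_0\oplus\cA_\alpha\oplus\cA_\veps\oplus\cA_\eta$ of the relations $\cA_0=\ran\begin{smallbmatrix}I_{n_0}\\A_0\end{smallbmatrix}$, $\cA_\alpha=\ran\begin{smallbmatrix}N_\alpha\\I_{|\alpha|}\end{smallbmatrix}$, $\cA_\veps=\ran\begin{smallbmatrix}K_\veps\\L_\veps\end{smallbmatrix}$, $\cA_\eta=\ran\begin{smallbmatrix}K_\eta^\top\\L_\eta^\top\end{smallbmatrix}$ in the four summands. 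For a direct sum of relations, products, kernels and multivalued parts split summandwise, so $\sR_\lambda$, $\sR_\infty$ and $\sR_c$ of $\wh\cA$ are the direct sums over the four blocks, and (using $\sR_c\subseteq\sR_\lambda$ from~\eqref{il++} applied blockwise) $\sigma_\pi(\wh\cA)$ is the union of the four $\sigma_\pi$'s. Since moreover $\sR_c=\sR_0\cap\sR_\infty$ and $\sigma_\pi$ is read off from $\sR_\lambda$ and $\sR_c$ via Definition~\ref{Proper!!!}, everything reduces to computing $\sR_\infty$ and the root spaces $\sR_\lambda$, $\lambda\in\dC$, for each of $\cA_0,\cA_\alpha,\cA_\veps,\cA_\eta$.

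For the blocks I would argue: (a) $\cA_0$ is the graph of $A_0$, so all its powers are graphs, giving $\sR_\infty(\cA_0)=\{0\}$ and $\sR_\lambda(\cA_0)=\sR_\lambda(A_0)$. (b) $\cA_\alpha$ is the inverse of the graph of the nilpotent matrix $N_\alpha$, so by Lemma~\ref{rlm0} one has $\sR_\infty(\cA_\alpha)=\sR_0(N_\alpha)=\dC^{|\alpha|}$, $\sR_0(\cA_\alpha)=\sR_\infty(\text{graph }N_\alpha)=\{0\}$, and $\sR_\lambda(\cA_\alpha)=\sR_{1/\lambda}(N_\alpha)=\{0\}$ for $\lambda\neq0$, the last by nilpotency of $N_\alpha$. (c) A single pair $(K_k,L_k)$ in $\cA_\veps$ yields the shift relation $\{((x_1,\dots,x_{k-1}),(x_2,\dots,x_k)):x\in\dC^k\}$ in $\dC^{k-1}$; since the chain $(0,e_{k-1}),(e_{k-1},e_{k-2}),\dots,(e_2,e_1),(e_1,0)$ lies in it, every coordinate vector is an entry of a singular chain, so this block has $\sR_c=\dC^{k-1}$, hence (as $\sR_c\subseteq\sR_\lambda\subseteq\dC^{k-1}$) all its $\sR_\lambda$ and $\sR_\infty$ equal $\dC^{k-1}$; summing over the entries of $\veps$, $\sR_\lambda(\cA_\veps)=\sR_\infty(\cA_\veps)=\dC^{|\veps|-n_\veps}$ for every $\lambda$. (d) A single pair $(K_k^\top,L_k^\top)$ in $\cA_\eta$ is the partially defined operator $(x_1,\dots,x_{k-1},0)\mapsto(0,x_1,\dots,x_{k-1})$ on $\dC^k$; it has $\mul=\{0\}$, so $\sR_\infty(\cA_\eta)=\{0\}$, and a short computation gives $\ker(\cA_\eta-\lambda)=\{0\}$ for all $\lambda\in\dC$, whence every Jordan chain~\eqref{jchain} at $\lambda$ collapses to $0$ and $\sR_\lambda(\cA_\eta)=\{0\}$. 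Feeding these four answers back through the second paragraph yields (ii) and (iii); intersecting~(iii) at $\lambda=0$ with~(ii) yields~(i); and comparing $\sR_\lambda$ with $\sR_c$ summand by summand yields~(iv), because $\sR_\lambda(\cA)$ properly contains $\sR_c(\cA)$ exactly when $\lambda\in\sigma_p(A_0)$ or when $\lambda=\infty$ and $|\alpha|\neq0$.

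I expect the steps with real content to be the concrete identification of the $\cA_\veps$- and $\cA_\eta$-blocks together with the two verifications attached to them — that the exhibited singular chain lies in $\cA_\veps$, and that $\ker(\cA_\eta-\lambda)=\{0\}$ for every $\lambda$ — while the reduction and the passage through direct sums are routine bookkeeping with Lemma~\ref{rlm0} and the identities of the earlier sections. I would also check separately the degenerate entries $\veps_i=1$ and $\eta_i=1$, where the corresponding block is empty and contributes $\dC^0=\{0\}$, so that all four formulas remain valid.
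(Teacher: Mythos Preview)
Your argument is correct and follows a genuinely different route from the paper. The paper cites \cite[Theorem~4.5]{BTW16} for (i) and (ii) outright, and for (iii) it works directly in the original relation $\cA$: given $x\in\sR_\lambda(\cA)$ it takes a Jordan chain, writes each pair as $(Ez_i,Fz_i)$, then transforms by $W$ and $T^{-1}$ and reads off the four block components of each $T^{-1}z_i$ one equation at a time, invoking \cite[Lemma~4.1]{BTW16} for the $\eta$-block; the reverse inclusion is obtained by exhibiting explicit Jordan chains built from the Jordan blocks of $A_0$. You instead first pass to $\wh\cA$ via the similarity $W$, then factor $\wh\cA$ as a direct sum $\cA_0\oplus\cA_\alpha\oplus\cA_\veps\oplus\cA_\eta$ and compute $\sR_\lambda$, $\sR_\infty$, $\sR_c$ block by block using only the abstract machinery of the earlier sections (Lemma~\ref{rlm0} for $\cA_\alpha$, an explicit singular chain for $\cA_\veps$, and the observation $\ker(\cA_\eta-\lambda)=\{0\}$ for $\cA_\eta$). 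What this buys you is a self-contained and uniform treatment of all four parts that does not rely on the external results from \cite{BTW16}; what the paper's coordinate computation buys is that no general fact about direct sums of relations needs to be invoked. One small point worth making explicit in your write-up is the one-line induction showing that $\ker(B-\lambda)=\{0\}$ forces $\ker(B-\lambda)^i=\{0\}$ for all $i$ (hence $\sR_\lambda(B)=\{0\}$) for a relation $B$, since this is the step that replaces the paper's appeal to \cite[Lemma~4.1]{BTW16}.
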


\begin{proof}
Statements (i) and (ii) are from \cite[Theorem 4.5]{BTW16}.

In order to show (iii), let $x\in\sR_\lambda(\cA)\setminus\{0\}$. Then
there exists a chain of the form~\eqref{jchain} with $x=x_n$.
 By \eqref{penci} there exist $z_1,\ldots,z_{n}\in\C^d$ such that
\begin{equation}\label{eq:FGzi}
     \begin{array}{rcl}
     (x_1,\lambda x_1) &=& (Ez_1, Fz_1),\\
     (x_2, x_1 + \lambda x_2) &=& (Ez_2, Fz_2), \\
     \dots & \dots& \dots\\
     (x_{n-1},x_{n-2}+\lambda x_{n-1}) &=& (Ez_{n-1}, Fz_{n-1}),\\
     (x_n, x_{n-1}+\lambda x_n) &=& (Ez_{n}, Fz_{n}).
     \end{array}
\end{equation}
For $i\in \{1,\ldots,n\}$
define $y_i=T^{-1} z_i$. Partitioning $y_i$ according to the decomposition~\eqref{kcf}:
 \[
y_i = (y_{i,1}^\top, \ldots, y_{i,4}^\top)^\top \quad \mbox{with} \quad  y_{i,1}\in\C^{n_0},
\,\, y_{i,2}\in\C^{|\alpha|}, \,\, y_{i,3}\in\C^{|\veps|}, \,\, y_{i,4}\in\C^{|\eta|-n_\eta},
\]
 one obtains from the first equation in~\eqref{eq:FGzi} that
\begin{equation*}
\begin{array}{rcl}
     Wx_1 &=& WEz_1 =  (WET) T^{-1} z_1 = \begin{pmatrix}
I_{n_0}&0&0&0\\ 0&N_\alpha &0&0\\ 0&0&K_\veps&0\\ 0&0&0&K_\eta^\top\end{pmatrix}
\begin{pmatrix} y_{1,1}\\ y_{1,2}\\ y_{1,3}\\ y_{1,4}\end{pmatrix},\\
&&\\
    \lambda W x_1 &=& WFz_1 =  (WFT) T^{-1} z_1
     =  \begin{pmatrix} A_0
&0&0&0\\ 0&I_{|\alpha|}&0&0\\ 0&0&L_\veps&0\\ 0&0&0&L_\eta^\top\end{pmatrix}
\begin{pmatrix} y_{1,1}\\ y_{1,2}\\ y_{1,3}\\ y_{1,4}\end{pmatrix}.
\end{array}
\end{equation*}
 Therefore, each of the four components leads to an equation, thus
\begin{align*}
 (A_0-\lambda I_{n_0}) y_{1,1} &=0 ,\quad (I_{|\alpha|} -\lambda N_\alpha) y_{1,2} =0,\\
(L_\veps-\lambda K_\veps)  y_{1,3} &=0, \quad (L_\eta^\top-\lambda K_\eta^\top)  y_{1,4} =0.
\end{align*}
Clearly, if $y_{1,1} \neq 0$, $\lambda$ is an eigenvalue
of $A_0$ with eigenvector  $y_{1,1}$.
Moreover, $I_{|\alpha|} -\lambda N_\alpha$ is an invertible matrix and,
invoking~\cite[Lemma~4.1]{BTW16}, one obtains
$$
y_{1,2}=0 \quad \mbox{and} \quad y_{1,4}=0,
$$
hence,
\begin{equation}\label{quisquilloso}
Wx_1 = \begin{pmatrix} y_{1,1}\\ 0\\K_\veps y_{1,3}\\ 0\end{pmatrix}.
\end{equation}
The second equation in~\eqref{eq:FGzi} gives
\begin{equation*}
\begin{array}{rcl}
     Wx_2 &=& WEz_2 =  (WET) T^{-1} z_2 = \begin{pmatrix}
I_{n_0}&0&0&0\\ 0&N_\alpha &0&0\\ 0&0&K_\veps&0\\ 0&0&0&K_\eta^\top\end{pmatrix}
\begin{pmatrix} y_{2,1}\\ y_{2,2}\\ y_{2,3}\\ y_{2,4}\end{pmatrix},\\
&&\\
    W x_1+ \lambda W x_2 &=& WFz_2 =  (WFT) T^{-1} z_2
     =  \begin{pmatrix} A_0
&0&0&0\\ 0&I_{|\alpha|}&0&0\\ 0&0&L_\veps&0\\ 0&0&0&L_\eta^\top\end{pmatrix}
\begin{pmatrix} y_{2,1}\\ y_{2,2}\\ y_{2,3}\\ y_{2,4}\end{pmatrix}.
\end{array}
\end{equation*}
Thanks to \eqref{quisquilloso}, the first, second, and fourth components of the identity
\[
(W x_1+ \lambda W x_2) - \lambda W x_2=Wx_1
\]
 lead to the equations
 $$
(A_0-\lambda I_{n_0}) y_{2,1} =y_{1,1} ,\quad (I_{|\alpha|} -\lambda N_\alpha) y_{2,2} =0, \quad
 (L_\eta^\top-\lambda K_\eta^\top)  y_{2,4} =0.
$$
In the same way as above, one sees that
$$
y_{2,2}=0 \quad \mbox{and} \quad y_{2,4}=0
$$
and, hence,
\begin{equation*}
Wx_2 = \begin{pmatrix} y_{2,1}\\ 0\\K_\veps y_{2,3}\\ 0\end{pmatrix}.
\end{equation*}
The third equation  in~\eqref{eq:FGzi} and, in fact,  all the remaining equations  in~\eqref{eq:FGzi}
are of the same form as the second one. Hence, proceeding in this way, one obtains
for $i=1,\ldots,n$:
\begin{equation*}
     Wx_i =  \begin{pmatrix} y_{i,1}\\ 0\\K_\veps y_{i,3}\\ 0\end{pmatrix}
     \quad\mbox{and}\quad
     (A_0-\lambda I_{n_0}) y_{i,1} =y_{i-1,1},  ,
\end{equation*}
where $y_{0,1}:=0$.
Therefore, one has $y_{i,1}\in \sR_\lambda(A_0)$ for $i=1,\ldots,n$ and this shows
\begin{equation}\label{eq:Rc_Weimar-}
     \sR_\lambda(\cA) \subseteq W^{-1} \left(\sR_\lambda(A_0) \times \{0\}^{|\alpha|}
\times \mathbb C^{|\veps|-n_\veps} \times \{0\}^{|\eta|} \right).
\end{equation}

The reverse inclusion remains to be shown:
\begin{equation}\label{eq:Rc_Weimar}
    W^{-1} \left( \sR_\lambda(A_0) \times \{0\}^{|\alpha|}
\times \mathbb C^{|\veps|-n_\veps} \times \{0\}^{|\eta|}\right)
 \subset \sR_\lambda(\cA).
\end{equation}
If $\lambda$ is not an eigenvalue of the matrix $A_0$, then $\sR_\lambda(A_0)=\{0\}$
and (i)  together with Proposition~\ref{Lem:SingChainMani} imply
\begin{equation*}
    W^{-1} \left( \{0\}^{n_0} \times \{0\}^{|\alpha|}
\times \mathbb C^{|\veps|-n_\veps} \times \{0\}^{|\eta|}\right)
= \sR_c(\cA) \subset \sR_\lambda(\cA).
\end{equation*}
 so that \eqref{eq:Rc_Weimar} holds in this case.
Next assume that $\lambda\in\sigma_p(A_0)$ and it clearly suffices to show
\begin{equation}\label{eq:Rc_Weimar+}
    W^{-1} \left( \sR_\lambda(A_0) \times \{0\}^{|\alpha|}
\times \mathbb \{0\}^{|\veps|-n_\veps} \times \{0\}^{|\eta|}\right)
 \subset \sR_\lambda(\cA).
\end{equation}
The matrix $A_0$ is in Jordan canonical form and it is no restriction to assume
that the first Jordan block in $A_0$ is of the form
$J_n(\lambda) = \lambda I_n + N_n$ for some $n\in\N$.
Denote by $e_i$, $i=1,\ldots,n_0$, the
standard unit vectors in $\mathbb C^{n_0}$. Then, for $i=1,\ldots,n$,
\begin{equation}\label{frijoles}
     A_0e_i =  \begin{pmatrix}
J_{n}(\lambda)&0\\
0&*\end{pmatrix}
e_i =
\left\{
\begin{array}{ll}
 \lambda e_i ,& \mbox{if } i=n,\\[1ex]
 e_{i+1} +\lambda e_i,& \mbox{if } i=1,\ldots,n-1,
\end{array}
\right.
\end{equation}
where the symbol $*$ stands for possibly more Jordan blocks. Set
\begin{equation*}
x_i:= W^{-1} \begin{pmatrix} e_i\\ 0\\ 0\\ 0\end{pmatrix},\quad i=1,\ldots,n.
\end{equation*}
Together with \eqref{frijoles} this implies that for $i=1,\ldots,n-1$
\[
\begin{split}
\begin{pmatrix}   x_{i}\\  x_{i+1}+\lambda x_{i} \end{pmatrix} &=
\left( \begin{array}{l}
W^{-1}
\begin{pmatrix} I_{n_0}&0&0&0\\ 0&N_\alpha &0&0\\
0&0&K_\veps&0\\ 0&0&0&K_\eta^\top\end{pmatrix}\\ \ \\
 W^{-1} \begin{pmatrix} A_0
&0&0&0\\ 0&I_{|\alpha|}&0&0\\ 0&0&L_\veps&0\\
0&0&0&L_\eta^\top\end{pmatrix}
\end{array}\right)
\begin{pmatrix} e_i\\ 0\\
0\\ 0\end{pmatrix} \\
&=
\begin{pmatrix} F\\ G\end{pmatrix} T
\begin{pmatrix} e_i\\ 0\\ 0\\ 0\end{pmatrix}
\in \ran \begin{pmatrix} F\\ G\end{pmatrix},
\end{split}
\]
which shows with \eqref{penci} that $(x_{i},  x_{i+1}+\lambda x_{i}) \in \cA$.
A similar equation with \eqref{frijoles} for $i=n$ shows that
$(x_n,\lambda x_n)\in  \cA$, and therefore
\begin{equation*}
 (x_1,x_{2}+\lambda x_1 ),
 (x_{2},x_{3}+\lambda x_{3} ),
 \dots,
 (x_{n-1}, x_{n}+\lambda x_{n-1}),
 (x_{n},\lambda x_{n} ) \in \cA.
\end{equation*}
The same arguments work for the remaining Jordan blocks of $A_0$
and \eqref{eq:Rc_Weimar+} and, hence,  \eqref{eq:Rc_Weimar}  is proved.
The inclusions \eqref{eq:Rc_Weimar} and \eqref{eq:Rc_Weimar-} confirm (iii).

Statement (iv) follows from the representations of the singular chain space in~(i),
the root space $\sR_\infty(\cA)$ in~(ii), the root space $\sR_\lambda(\cA)$ in~(iii), and
Definition~\ref{Proper!!!}.
\end{proof}

\end{document}